\documentclass[a4paper,12pt]{scrartcl}

\usepackage[english]{babel}



\usepackage{mathtools}
\mathtoolsset{showonlyrefs}

\usepackage{amssymb, amsthm, amsmath}

\usepackage{subcaption}
\usepackage{graphicx}

\usepackage{url}
\usepackage{hyperref}

\usepackage{enumerate}






\newcommand{\conn}{\psi}

\newcommand{\E}{\mathbb{E}}

\renewcommand{\P}{\mathbb{P}}
\renewcommand{\d}{\mathrm{d}}

\newcommand{\nats}{\mathbb{N}}

\newcommand{\ints}{\mathbb{Z}}
\newcommand{\reals}{\mathbb{R}}
\newcommand{\Pois}{\operatorname{Pois}}

\newcommand{\eps}{\varepsilon}

\newcommand{\del}{\partial}

\newcommand{\set}[2]{\big\{ #1 \mid #2 \big\}}

\newcommand{\inxi}[1]{\text{ in }\xi_{#1}}


\newtheorem{theorem}{Theorem}[section]

\newtheorem{lemma}[theorem]{Lemma}

\theoremstyle{definition}
\newtheorem{defi}[theorem]{Definition}

\theoremstyle{remark}

\newtheorem{remark}[theorem]{Remark}

\author{
  Niclas K\"upper\footnote{Supported by an EPSRC grant EP/W523914/1.}
  \and
  Mathew D. Penrose\footnote{Department of Mathematical Sciences, University of Bath, Bath BA2 7AY, UK. email: \texttt{m.d.penrose@bath.ac.uk}}
\footnote{Supported by EPSRC grant EP/T028653/1}}
\date{University of Bath \\
\today}
\title{Largest component and sharpness in  continuum percolation}

\begin{document}
\maketitle

\begin{abstract}
  We investigate the behavior of large connected components in the
  Poisson Random Connection model in non-critical regimes with any bounded connection function. We show that the asymptotic size of the largest component restricted to a window grows logarithmically in the volume of that window in the subcritical case, and linearly in the supercritical case. We also prove a sharpness result saying
  that the
  order of the cluster at the origin has an exponentially decaying tail in the subcritical regime.
\end{abstract}

\tableofcontents


\section{Introduction} \label{sec:Introduction}
Random geometric graphs are common stochastic models studied in statistical physics, wireless networking and as interesting mathematical objects in their own right. Most random geometric graphs of interest are defined on lattices or in the continuum. In this paper, we study the random connection model (RCM), a continuum random geometric graph that is built on a Poisson point process. It is characterized by a connection function $\conn$ that describes the probability of connecting two points.

To construct the RCM we first sample a Poisson point process in Euclidean space and then connect two points, independent of the rest of the process, with probability given by $\conn$. It is well known that this model (and many other random graphs) can have two distinct phases. By tuning the density of points $\lambda$, we can either find ourselves in the \emph{subcritical} phase with no long paths, or in the \emph{supercritical} phase with many long paths. There also exists a special \emph{critical} point $\lambda_c$  on the boundary of the two, whose properties are less well understood.

The restriction of the  $d$-dimensional RCM to a large box
$\Lambda_t := [-t/2, t/2]^d$ is also known as the
{\em soft random geometric graph} (SRGG). It can be viewed as a synthesis
of two classic finite random graph models, namely
the Erd\"os-Renyi random graph and the random geometric (Gilbert)
graph considered in the book \cite{penrose2003random}.


The ultimate goal of this paper is to determine the 
order of the largest component of the SRGG in the box $\Lambda_t$
(denoted $L(t)$), asymptotically as $t \to \infty$, for all non-critical values of $\lambda$. We shall demonstrate
that (i) if $\lambda < \lambda_c$ then $L(t)/ \log t$ converges in probability
to a certain positive constant, and (ii)
if $\lambda > \lambda_c$ then $L(t)/  t^d$ converges in $\mathcal L^1$
to another  positive constant,  namely the percolation probability.
(Naturally the limiting constants depend on $d$, $\psi(\cdot)$ and $\lambda$.)
Our results hold in all dimensions
$d \in \mathbb N$, but require $\psi$ to have finite range (i.e., bounded support).

For the Gilbert graph, i.e. the special case
where $\conn(\cdot) = \mathbf{1}\{\|\cdot\| \leq 1\}$
(with $\|\cdot\|$ denoting the Euclidean norm throughout this paper),
both (i) and (ii) (with convergence in probability) were already known by 2003
(see \cite{penrose2003random}), but the extension to more general connection
functions seems far from trivial and has required several more recent ideas.

We present our result on the supercritical case (Item (ii) above) in
Theorem \ref{th:super}.
Most of the argument for this case is covered elsewhere in \cite{pen_giant_comp_srgg,kupper2025,penrose2025supercritical} but those works  leave a gap unfilled, namely getting from convergence of expectations to convergence in $\mathcal L^1$ for $d \geq 3$;
here we fill that gap.

Our main concern is with Item (i) above, i.e., the subcritical case. The
main obstacle to adapting the argument for the Gilbert graph to
more general $\psi(\cdot)$ is to prove
what is often referred to as {\em sharpness} of the phase transition.
Roughly, sharpness states that in the subcritical phase large components become exponentially unlikely.

Our second main result, then, is a sharpness result for the RCM.
This is needed for our result on the largest component in the subcritical phase,
and is of interest in its own right.

For the Gilbert graph, sharpness was proved in \cite{penrose2003random}
by discretizing space and using Menshikov's sharpness result
\cite{menshikov1986coincidence}
for lattice percolation.
For the general  RCM, it seems much harder to get  a discretization method to work, because if $\mathbb R^d$ is divided into small boxes, for more general choices of $\psi$ one has to keep track of how many Poisson points lie in each box, rather than just whether the box contains a Poisson point or not.

More recently, other approaches to proving sharpness for
lattice percolation have become avaiable, and
our proof of sharpness is inspired by Duminil-Copin and Tassion's
proof of sharpness for lattice percolation in
\cite{duminilcopin2015new}.
Adapting this method to the  continuum RCM (without resorting to any discretization) seems to be a non-trivial matter, for reasons we shall
discuss further later on.

Yet another method of proving sharpness for lattice percolation was recently  found by Vanneuville \cite{vanneuville}.
Subsequent to the release of the first version \cite{v1} of the present paper,
Higgs \cite{higgs}  has given another proof of sharpness for the RCM
based on adapting Vanneuville's argument to the continuum;
also K\"upper has extended
the results and methods of the present paper
to {\em marked} random connection models in \cite{kupper2025}.

Our methods are heavily dependent on the connection function $\psi$ having
bounded support, and we would not expect that they
could easily be adapted to the case of unbounded support.
The proof of sharpness in \cite{higgs}, however,
{\em does} carry over to the case with unbounded support.


We now review some of the literature on the RCM and SRGG.
Early  results on the RCM can be found in Meester and Roy's book
on continuum percolation \cite{meester1996continuum}. These include the uniqueness of the infinite component (when it exists) and the agreement of two separate definitions of criticality.

Later,
Last and Ziesche showed a connection to the Ornstein-Zernike equation \cite{LastZiescheStationary}.  Heydenreich {\em et al.}
\cite{lace_exp_mean}
developed a lace expansion for the two-point function.
This yields, for
sufficiently high dimensions, the `triangle condition'
and hence continuity of the percolation function, along with information
about
certain critical exponents (see also
\cite{caicedo2023critical}).
In \cite{dickson2023expansion},
an expansion of the critical value $\lambda_c$ in the limit as $d\to\infty$
is developed.

Large deviations results for the largest component $L(t)$
in the supercritical Gilbert graph (for general $d$) were proved
by Penrose and Pisztora in \cite{penrose1996large}.
In the case where $\conn(x) = p\mathbf{1}\{\|x\|\leq 1\}$
with $p \in [0,1]$ it was shown in \cite{mit_giant_comp_srgg} for $d=2$ that the convergence of $L(t)/t^2$ in
the supercritical case holds almost surely,
and also that the critical parameters $\lambda_c(p)$ and $p_c(\lambda)$ are inverse functions  of each other.

Connection functions with infinite range have also been considered,
and results in that setting often have a different flavour.
Continuity of the percolation function has been established
in all dimensions for a class of infinite-range  connection functions,
allowing for marked points,
by M\"{o}nch \cite{monch}.
Large deviations results for the largest component
for a  certain class of supercritical
`kernel-based' connection functions with infinite range allowing marked points
are given by Jorritsma {\em et al.} in \cite{jorritsma2025large}.

The rest of this paper is organized as follows.
In Section \ref{sec:MainRes} we construct the model, state the main theorem
(Theorem \ref{thm:main_large}) on the largest component
in the subcritical phase,
and describe the existing tools we require.
In Section \ref{sec:sharp} we prove
(and further discuss) sharpness for the RCM (Theorem \ref{thm:sharpness}).
%
The limiting constant in Theorem \ref{thm:main_large} involves an
exponent we call the
{\em volume decay rate} (see Definition \ref{def:cor-length}).
In Section \ref{sec:LargeComp},
we present Theorem \ref{thm:zeta-prop}, on existence and basic
properties of the volume decay rate, and then
complete the proof of
Theorem \ref{thm:main_large}.
Finally in Section \ref{s:supercritical}
we prove the law of large numbers
for the {\em supercritical} largest component of the SRGG
(Theorem \ref{th:super}).

\begin{figure}[h]
  \centering
  \begin{subfigure}{.49\textwidth}
    \centering
    \includegraphics[width=.7\linewidth]{./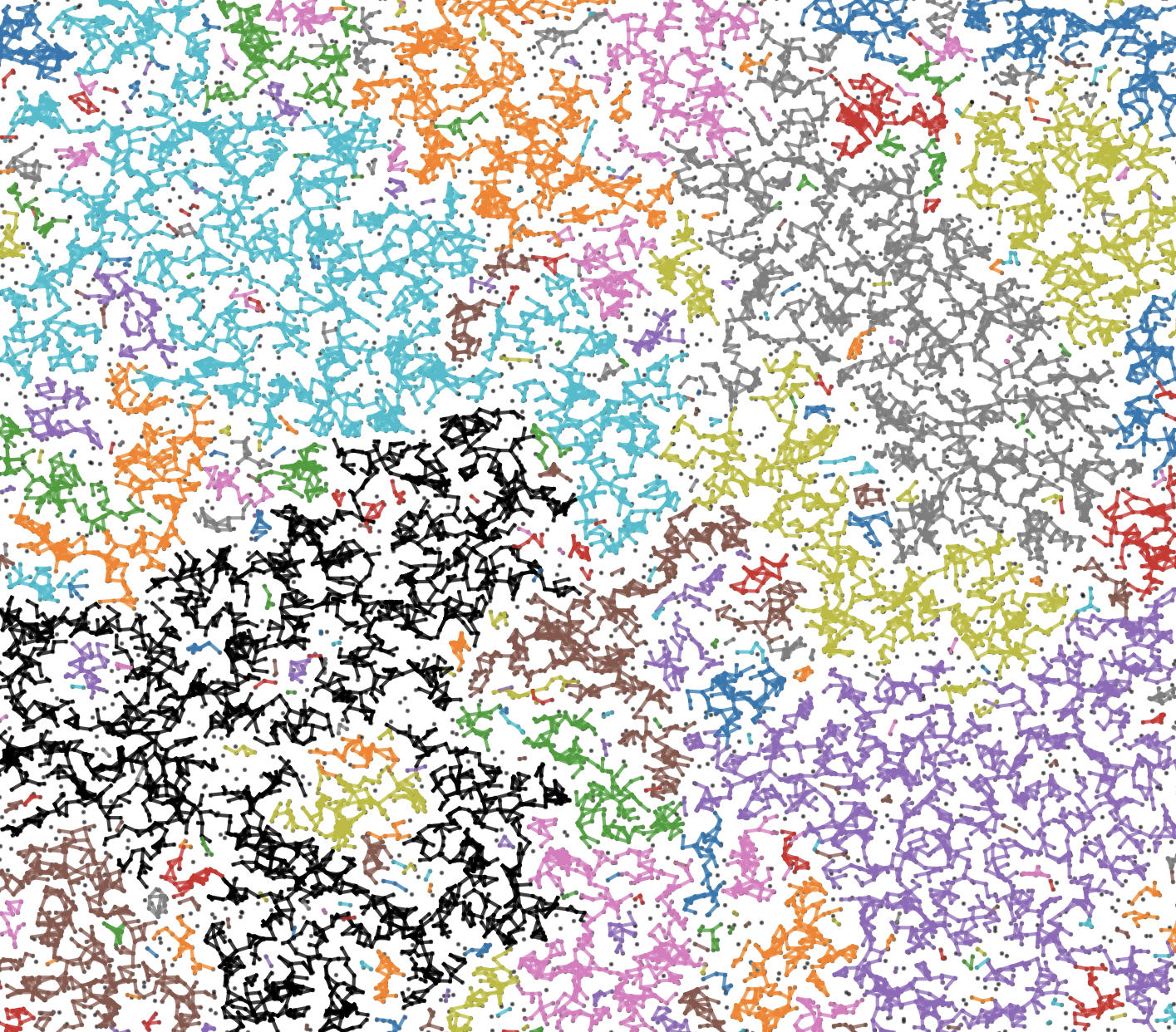}
  \end{subfigure}
  \begin{subfigure}{.49\textwidth}
    \centering
    \includegraphics[width=.7\linewidth]{./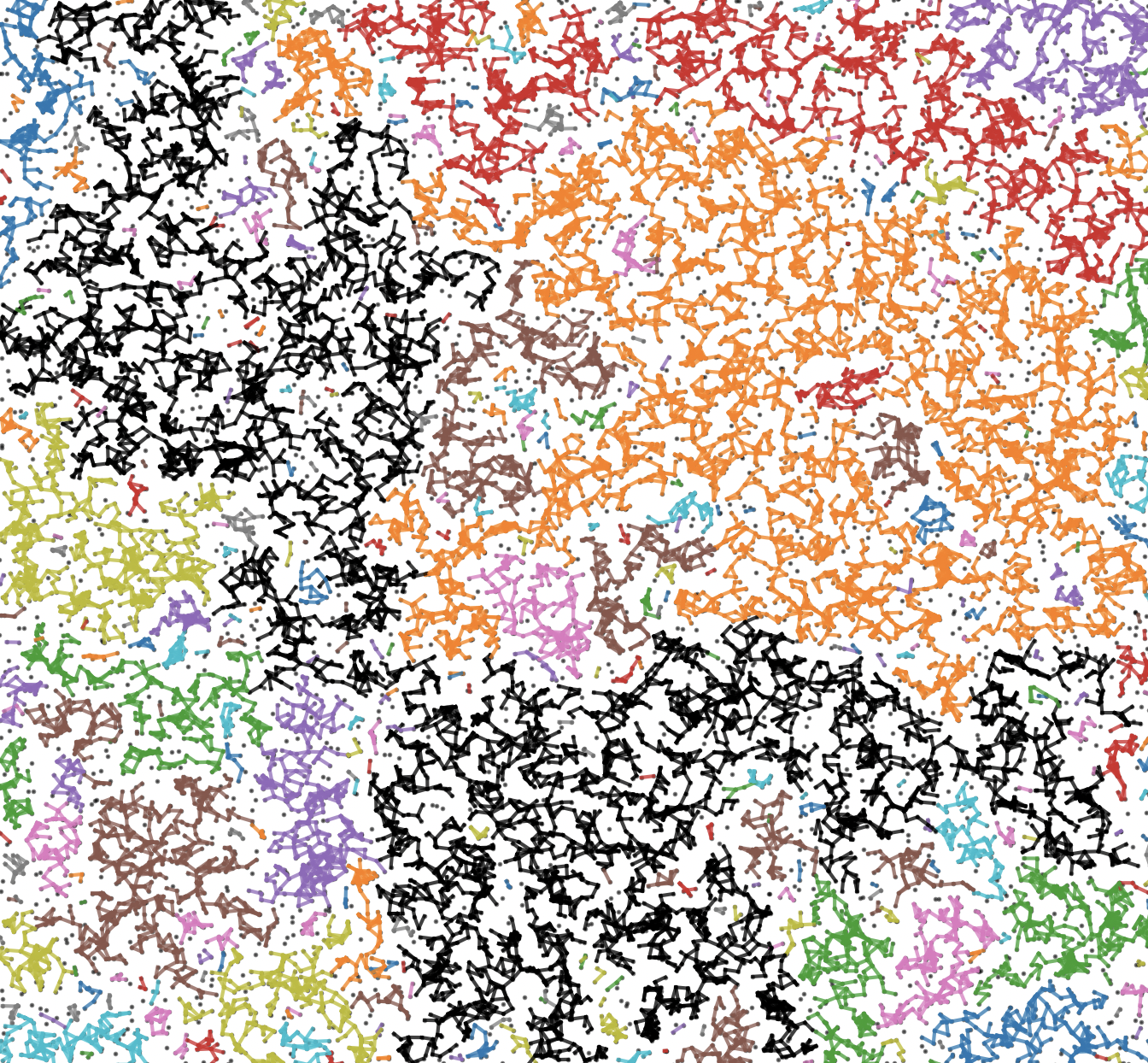}
  \end{subfigure}
  \caption{Two instances of a near-critical SRGG with 
    $\lambda=2.5, \psi (x) =0.35 \mathbf{1}\{\|x\| \leq 1\}$
  and an observation window of side-length $80$. The largest connected components are colored black.}
\end{figure}

\section{Background and Main Result} \label{sec:MainRes}
\subsection{Definitions and Overview}
\label{ss:DefOver}

To construct the Random Connection Model, we will use a formalism akin to \cite{LastZiescheStationary} and \cite{lace_exp_mean}. We introduce a measurable connection function $\psi: \mathbb{R}^d \rightarrow [0,1]$ that is symmetric, i.e., $\psi(x) = \psi(-x)$, and has bounded support,
i.e., $\text{supp}(\psi) \subset \Lambda_K$ for some $K \in (0,\infty)$.

We want to rigorously establish a method to take any point process and turn it into a Random Connection Model. To that end let $\eta$ be any well-behaved point process in $\reals^d$. We assume there exists an enumeration $(X_i)_{i \in I}$ of $\eta$, for some index set $I \subset \mathbb N$.
Let $<$ denote any strict total order on $\reals^d$, such as the lexicographic ordering. Let $(U_{i,j})_{i, j\in \nats}$ be independent uniform random variables on $[0, 1]$ independent of $\eta$. Then we will define $\xi[\eta]$ by
\begin{equation}
  \xi[\eta] := \big\{
    (\{X_i, X_j\}, U_{i,j})
    \mid
    X_i, X_j\in \eta,
    X_i<X_j
  \big\}.
\end{equation}
We call $\xi[\eta]$ an \emph{independent edge marking of $\eta$}. We note that the independent edge marking $\xi[\eta]$, together with a choice of $\conn$, contains sufficient information to extract the geometric random graph of interest where
\begin{equation}
  \begin{aligned}
    (V, E)
    =
    (
      \eta,
      \set
      {\{X_i, X_j\}}
      {X_i<X_j, U_{i,j}\leq \conn(X_i-X_j), \forall i,j\in I 
    })
    .
  \end{aligned}
\end{equation}
See \cite{lace_exp_mean} for more information.

Let $\mathcal P_\lambda$ be a homogeneous Poisson point process on $\reals^d$ with intensity $\lambda>0$.
We will write $\xi_\lambda := \xi[\mathcal P_\lambda]$ to simplify notation.
We will require to couple the Poisson RCM across varying $\lambda$ in
such a way that $\xi_{\lambda_1}$ is a subgraph of $\xi_{\lambda_2}$
whenever $\lambda_1 < \lambda_2$.
This can be achieved by choosing some maximum $\lambda_M$ and taking the coupling across $\mathcal P_\lambda, 0 \leq \lambda \leq \lambda_M$ as follows.
Assume there is a further collection of random variables
$W_i, i \in \mathbb N$ that are uniformly distributed over $[0,\lambda_M]$,
independent of each other and of everything else. Enumerate
$\mathcal P_{\lambda_M} = (X_i)_{i \in \mathbb N}$ (for example, in order of increasing distance from the origin). 
Give each point
$X_i$ of $\mathcal P_{\lambda_M}$ the mark $W_i$, and for $\lambda \in [0, \lambda_M]$ set $\mathcal P_{\lambda} = \{X_{i}: W_i \leq \lambda \}$.
Thus if $\lambda  < \lambda_M$ we have $\mathcal P_{\lambda}  =
\{X_i: i \in I\}$ for a certain random proper subset $I$ of $\mathbb N$.


Next, we require a method to add additional points to the Poisson point process without disturbing the already existing edges. This can be achieved by sampling additional random variables $(U_{-1, n})_{n\in\nats}$, and assigning these to the edges between the added point and all other points, in effect viewing the additional point as before all others in the total order. If for some $x\in\reals^d$ we wish to add $x$ to $\xi_\lambda$ we write $\xi_\lambda^x := \xi[\mathcal P_\lambda\cup \{x\}]$, by some abuse of notation.
For further additional points we simply iterate the first index of $U_{i, j}$ down by one.

Finally we define connected components in the usual way for the graph induced by the independent edge marking $\xi_\lambda$. We write $C_x := C_x(\lambda) := C(x, \xi^x_\lambda)$ for the connected component of $x\in\reals^d$.

The object of interest is the \emph{largest connected component} of this graph when restricted to a large box (see Definition \ref{def:large-comp}). To this end, let $\Lambda_l$ denote the box $[-l/2, l/2]^d$ with side-length $l$. We also define the shifted set $\Lambda_l(x) := \Lambda_l + x$ for all $x\in\reals^d$. The largest connected component within this box is represented as $L_1(\xi_{\lambda} \cap \Lambda_l)$. The number of nodes in this component is denoted by $|L_1(\xi_{\lambda} \cap \Lambda_l)|$. More generally, we use $|\cdot|$ to indicate the number of vertices in a graph.

To study the largest component we will require the tools of percolation theory. For the purposes of this theory, it will be convenient to introduce an extra point into our Poisson process. Let $o$ denote the origin of $\mathbb{R}^d$. We define $\xi_{\lambda}^o$ as the random geometric graph described earlier, but with the inclusion of an additional point at the origin. The edges between $o$ and all other points are added in the same manner as before, independently of everything else. We begin our discussion by defining a connection between points and sets.

\begin{defi}[Connection]
  Let $x, y\in\mathcal{P}_\lambda$. We say $x$ and $y$ are \emph{connected} if a finite sequence of edges $(e_i)_{i\leq n}$ exists in $\xi_\lambda$ such that $e_i = \{z_i, z_{i+1}\}$ where $z_0=x$ and $z_{n+1} = y$. We denote this event as $\{x \leftrightarrow y \text{ in } \xi_{\lambda}\}$.
  We call two sets $A, B \subset \mathbb{R}^d$ \emph{connected} if there exist points $x \in A \cap \mathcal{P}_\lambda$ and $y \in B \cap \mathcal{P}_\lambda$ such that $x \leftrightarrow y$. We denote this event as $A \leftrightarrow B$. Lastly, for the event $\{x\} \leftrightarrow A$ in $\xi_\lambda^x$, we simply write $x \leftrightarrow A$ in $\xi_\lambda^x$.
\end{defi}

\begin{defi}[Percolation Probability]
  Given $\lambda \in (0, \infty)$ and given a connection function $\psi$, the $t$-percolation probability is defined as
  \begin{equation}
    \theta_t(\lambda) = \theta_t(\lambda, \psi) = \mathbb{P}[o \leftrightarrow (\mathbb R^d \setminus \Lambda_t) \text{ in } \xi_{\lambda}^o].
  \end{equation}
  We define the percolation event as $\{o\leftrightarrow \infty\} := \bigcap_{t>0}\{o\leftrightarrow (\mathbb R^d \setminus \Lambda_t)\}$.
  We define the percolation probability as
  \begin{equation}
    \theta(\lambda) = \theta(\lambda, \psi) = \mathbb{P}_{\lambda}[o \leftrightarrow \infty \text{ in } \xi_{\lambda}^o] =
    \lim_{t \to \infty} \theta_t(\lambda).
  \end{equation}
\end{defi}

\begin{remark}
  The quantity $\theta(\lambda)$ is well defined. The $t$-percolation probability is monotonically decreasing in $t$ and bounded from below by $0$, ensuring the limit exists. The quantity $\theta(\lambda)$ can also be interpreted as the probability that the origin is connected to the unique infinite component, should it exist. Uniqueness of the infinite component is established in \cite{meester1996continuum}.
\end{remark}

\begin{defi}[Criticality]
  Given a connection function $\psi$, the critical parameter $\lambda_c$ is defined as
  \begin{align}
    \lambda_c = \lambda_c(\psi) = \inf\{\lambda \mid \theta(\lambda) > 0\}.
    \label{e:crit}
  \end{align}
  A value of $\lambda$ is said to be \emph{subcritical} if $\lambda < \lambda_c$ and \emph{supercritical} if $\lambda > \lambda_c$.
  We define $\inf \varnothing = \infty$, so if $\psi \equiv 0$, then $\lambda_c = \infty$. Also, if $d=1$ and $\psi $ has bounded support then
  it is well known that $\lambda_c = \infty$.
\end{defi}

\begin{defi}[Largest Component] \label{def:large-comp}
  The largest component, as measured by the number of vertices, in a bounded Borel set $B\subset \mathbb{R}^d$ is denoted as $L_1(\xi_{\lambda} \cap B)$. In cases of multiple largest components, the one containing the largest point with respect to lexicographic ordering $<$ is chosen. The order of this component is denoted by $|L_1(\xi_{\lambda} \cap B)|$. When $B$ and $\xi$ are clear from the context, we will simply write $L_1$.
\end{defi}

\begin{defi}[Volume decay rate] \label{def:cor-length}
  The {\em volume decay rate}
  for parameters $(\lambda, \conn)$ is defined as
  \begin{align}
    \zeta(\lambda, \conn)
    & :=
    \lim_{n\to\infty}
    -\frac1n \log \P[|C_o|= n\text{ in }\xi_{\lambda}^o] \label{eq:def-cor-1}
    \\
    & =
    \lim_{n\to\infty}
    -\frac1n \log \P[
      n\leq|C_o|< \infty \text{ in }\xi_{\lambda}^o \label{eq:def-cor-2}
    ].
  \end{align}
\end{defi}
In Theorem \ref{thm:zeta-prop} we shall show
that the limits \eqref{eq:def-cor-1} and \eqref{eq:def-cor-2} are well defined, finite and equal when $\lambda<\lambda_c$, and give some properties of $\zeta(\cdot,\psi)$.
Note that $\log$ will always refer to the natural logarithm.

Here `volume' is shorthand for the number of vertices in a component.
One could seek to define a {\em diameter decay rate}
or {\em inverse correlation length} $\tilde{\zeta}(\lambda, \psi)$
by a similar formula to \eqref{eq:def-cor-2} but with the event
$\{n \leq |C_0| < \infty\}$ replaced by
$\{o \leftrightarrow (\mathbb R^d \setminus \Lambda_{2n})\}$, assuming
the limit exists (existence of such a limit is known for lattice models).

\subsection{Main Theorem}
\begin{theorem}[Main Theorem: Large Components]\label{thm:main_large}
  Consider the RCM with connection function $\conn$  having bounded support and subcritical intensity $\lambda\in(0, \lambda_c)$. Then
  $$
  \frac{|L_1(\xi_{\lambda}\cap \Lambda_s)|}{\log s}
  \to
  \frac{d}{\zeta(\lambda, \psi)}
  \textup{ in probability},
  $$
  as $s\rightarrow\infty$.
\end{theorem}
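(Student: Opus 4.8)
The plan is to prove the two bounds separately: a lower bound showing that with high probability $|L_1(\xi_\lambda \cap \Lambda_s)| \geq (d/\zeta - \eps)\log s$, and an upper bound showing $|L_1(\xi_\lambda \cap \Lambda_s)| \leq (d/\zeta + \eps)\log s$ whp. Throughout, $\zeta = \zeta(\lambda,\psi)$, and the key input is the sharpness result from Section~\ref{sec:sharp}: for $\lambda < \lambda_c$ we have $\zeta \in (0,\infty)$ and, via Definition~\ref{def:cor-length}, both $\P[|C_o| = n] = e^{-\zeta n + o(n)}$ and $\P[n \leq |C_o| < \infty] = e^{-\zeta n + o(n)}$. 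The whole argument mimics the treatment of the Gilbert disc model in \cite{penrose2003random}, with the lattice second-moment/independence arguments there replaced by the Poisson spatial independence and the sharpness estimate.

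For the \emph{upper bound}, I would tessellate $\Lambda_s$ into order $s^d$ unit boxes and use a union bound: the probability that some point of $\mathcal P_\lambda \cap \Lambda_s$ lies in a component (within the full graph, or within the box — note restricting to $\Lambda_s$ only makes components smaller) of size at least $k$ is at most roughly $\lambda s^d \cdot \P[|C_o| \geq k]$ by the Mecke/Palm formula. Choosing $k = (d/\zeta + \eps)\log s$ and using the sharpness bound $\P[|C_o| \geq k] \leq \P[k \leq |C_o| < \infty] + \theta_k'(\ldots)$, the latter term vanishing since $\lambda$ is subcritical so $\theta(\lambda) = 0$, gives $\P[|C_o| \geq k] \leq e^{-\zeta k (1+o(1))} = s^{-d - \eps\zeta/2 + o(1)}$, so the union bound tends to $0$. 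Some care is needed because components can straddle the boundary of $\Lambda_s$, but since the connection function has bounded support (range $1$), a component of the restricted graph is contained in a component of the unrestricted graph, so the estimate $\P[|C_x| \geq k \text{ in } \xi_\lambda] $ still dominates.

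For the \emph{lower bound}, the standard approach is a second-moment argument on the number $N_k$ of components of size (at least, or exactly) $k := (d/\zeta - \eps)\log s$ that are ``clearly inside'' $\Lambda_s$ — e.g. components entirely contained in a sub-box $\Lambda_{s-2}$, so that the restriction to $\Lambda_s$ does not affect them. One shows $\E[N_k] = \lambda s^d \cdot \frac{1}{k}\P[|C_o| = k](1 + o(1)) = s^{d - (d-\eps\zeta)(1+o(1))} \to \infty$ (the $1/k$ accounts for multiple roots counting the same component, and is only polynomial so harmless on the exponential scale). The second moment requires controlling $\E[N_k^2]$: the cross terms from pairs of points $x,y$ at distance greater than $2k$ (so their bounded-range components of size $\leq k$ cannot interact — here I use that a component of size $k$ has diameter at most $k$ since each edge has length $\leq 1$) factorize by Poisson independence, giving $\E[N_k^2] \leq \E[N_k]^2(1+o(1)) + (\text{lower-order diagonal-ish terms})$, whence $\P[N_k > 0] \to 1$ by Paley–Zygmund. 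To upgrade $N_k > 0$ to a statement about $|L_1|$ I note $N_k>0$ already gives a component of size $\geq k$ inside $\Lambda_s$; combined with the upper bound this pins down $|L_1|/\log s$.

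The main obstacle I anticipate is the second-moment computation for the lower bound: one must carefully define which components are counted (to ensure they are genuinely inside the box, unaffected by the boundary, and that each is counted a controlled number of times), verify the variance estimate using the Poisson Mecke formula for the second factorial moment measure, and control the diagonal contribution where the two counted points lie in the same or nearby components. A secondary technical point running throughout is translating the sharpness statement, which is phrased for $C_o$ (origin-rooted, with an added point at $o$), into uniform statements about $C_x$ for Poisson points $x$, and handling the distinction between $|C_o| = n$, $|C_o| \geq n$, and $n \leq |C_o| < \infty$; the subcriticality of $\lambda$ makes $\theta(\lambda)=0$, which reconciles these, but the rate of convergence in the $o(n)$ terms needs to be uniform enough to survive the $s^d$-fold union bound, which is exactly what the exponential form of sharpness provides.
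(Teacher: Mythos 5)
Your upper bound is essentially the paper's: Markov plus the Mecke formula bounds $\P[|L_1|\geq\alpha\log s]$ by $\lambda\int_{\Lambda_s}\P[|C(x,\xi^x\cap\Lambda_s)|\geq\alpha\log s]\,\d x$, restriction to a box only shrinks components, and $\P[|C_o|\geq\alpha\log s]\leq s^{-\zeta'\alpha}$ for any $\zeta'\in(d/\alpha,\zeta)$ kills the $s^d$ volume factor. (Your "$\theta'_k$" fragment is garbled, but the intended point — that subcriticality gives $\theta(\lambda)=0$, so $\P[|C_o|\geq k]=\P[k\leq|C_o|<\infty]$ — is correct and exactly what the paper relies on via the equivalence of the two formulas in Definition~\ref{def:cor-length}.)

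For the lower bound you take a genuinely different route from the paper. You propose a second-moment / Paley--Zygmund argument on a count $N_k$ of "well-inside" components of size $k=(d/\zeta-\eps)\log s$, using that a component of size $k$ has geometric diameter $\leq k$ (bounded-range $\psi$), so that for $|x-y|>2k+2$ the events $\{|C_x|=k\}$ and $\{|C_y|=k\}$ live on disjoint regions of the Poisson process and factor exactly, while the near-diagonal contributes only $O(s^d(\log s)^d\P[|C_o|=k])$, which is negligible against $\E[N_k]^2\asymp s^{2d}\P[|C_o|=k]^2/k^2$ once $s^d\P[|C_o|=k]\to\infty$. That computation does go through, and notably your route works directly at intensity $\lambda$ and does not need the continuity of $\zeta$ in $\lambda$. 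The paper instead tiles $\Lambda_s$ into $m(s)\sim(s/(2\beta\log s))^d$ disjoint sub-boxes, splits $\mathcal P_\lambda=\mathcal P_{\lambda'}\cup\mathcal P_{\lambda',\lambda}$, seeds each box with a (unique, when it exists) auxiliary point of $\mathcal P_{\lambda',\lambda}$ near its centre, lets $V_{i,s}$ be the size of that seed's component in $\xi_{\lambda'}$ restricted to the box, and exploits that the $V_{i,s}$ are exactly independent across boxes by Poisson spatial independence. This sidesteps any variance estimate entirely but costs a drop from $\lambda$ to $\lambda'$, which is why Theorem~\ref{thm:zeta-prop} (continuity of $\zeta$) is a genuine prerequisite for the paper's proof and not for yours. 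Both approaches are sound; yours trades the continuity input for a variance computation, the paper trades the variance computation for a seeding/thinning construction plus continuity. You should, however, double-check the definition of "clearly inside": the paper needs the box half-side to exceed the component diameter plus the seed offset (hence the side $2\beta\log s$), and the analogous buffer in your $\Lambda_{s-2}$ needs to scale like $k$, not be a constant $2$, for the factorisation to be exact.
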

\begin{remark}
  A weaker version of this result is more straightforward to prove,
  namely
  \begin{equation}
    \frac{|L_1(\xi_{\lambda}\cap\Lambda_s)|}{s^d}
    \to 0
    \quad \text{in probability whenever }  \theta(\lambda) =0.
  \end{equation}
  This was proved for $d=2$ in \cite[Theorem 1.1]{pen_giant_comp_srgg},
  and the proof readily carries over to general dimensions.

\end{remark}

To prove the main result, we will first need to prove that the $t$-percolation probability decays exponentially fast in $t$. This is done in Section \ref{sec:sharp}, where we will prove a stronger statement commonly called \emph{sharpness} of the phase transition. Sharpness refers to an exponentially decaying
upper bound on the probability from the origin $o$ to a distance $t$ and a lower bound on the supercritical percolation probability.

\subsection{Standard Tools}\label{ssec:standard_tools}

Next we discuss the standard tools we can use with this formalism. These tools were adapted to this context in \cite{LastZiescheStationary} and \cite{lace_exp_mean}.

\paragraph{The Mecke Equation.}\label{par:mecke}
The first crucial tool
is the Mecke equation
\cite{lace_exp_mean}, which allows us to calculate the expected values of sums. 
Given an $m\in\nats$, a measurable function $f$ of the correct domain mapping to $\reals_{\geq0}$
and an independent edge marking $\xi_\lambda$, the Mecke equation for $\xi_\lambda$ states
$$
\E\bigg[
  \sum_{\vec x\in\mathcal P_{\lambda}^{(m)}}
f(\xi_\lambda, \vec x)\bigg]
=
\lambda^m \int_{(\reals^d)^m}
\E\bigg[
f(\xi_{\lambda}^{x_1,\dots, x_m}, \vec x)\bigg]
\d\vec x,
$$
where $\mathcal P_\lambda^{(m)}=\{\vec x\ |\ x_i\in\mathcal P_\lambda, :x_i\neq x_j,\forall i\neq j\}$ and $\vec x=(x_1,\dots,x_m)$,
and $\xi_\lambda^{x_1,\ldots,x_m}
:= \xi [ \mathcal P_\lambda \cup \{x_1,\ldots,x_m\}]$.
In the case where $m=1$ we obtain the simpler form
$$
\E\bigg[
  \sum_{ x\in\mathcal P_{\lambda}}
f(\xi_{\lambda}, x)\bigg]
=
\lambda\int_{\reals^d}
\E\bigg[
f(\xi_{\lambda}^{x}, x)\bigg]
\d x,
$$
Note that if we force additional points into our point process we have to modify the Mecke equation as follows:
\begin{align*}
  \E\bigg[
    \sum_{ x\in\mathcal P_{\lambda}^y}
  f(\xi_{\lambda}^y, x)\bigg]
  =
  \E\bigg[
    f(\xi_{\lambda}^{y}, y)
  \bigg]
  +
  \lambda \int_{\mathbb R^d} \E\bigg[
    f(\xi_{\lambda}^{x,y}, x)
  \bigg]
  \d x
  ,
\end{align*}
where $y\in\reals^d$.

\paragraph{Margulis-Russo formula.}\label{par:Margulis}
This formula is standard on lattice models, and has been expanded to the RCM in \cite{LastZiescheStationary}. Let $f$ be a function with $\xi[\eta]$ as input. For any Borel set $\Lambda\subset\reals^d$ and an edge marking $\xi[\eta]$ of $\eta$ we define the restriction of $\xi[\eta]$ to the set $\Lambda$ by
$$
\xi[\eta\cap \Lambda] :=
\set{
  (\{ x, y\}, u) \in\xi
}{ \{x,y\}\subset \eta\cap \Lambda
}.
$$
We say a function $f$ lives on a Borel set $\Lambda\subset \reals^d$ if $f(\xi[\eta\cap\Lambda])=f(\xi[\eta])$ almost surely.  Now assume $f$ lives on some bounded $\Lambda$ and there exists some $\lambda_0>0$ such that $\E[f(\xi_{\lambda_0})]<\infty$. Then the Margulis-Russo formula states that for every $\lambda\leq \lambda_0$ we have
$$
\frac{\del}{\del\lambda} \E[f(\xi_{\lambda})]=
\int_{\Lambda}
\E[f(\xi^x_{\lambda}) - f(\xi_{\lambda})]\d x.
$$

\section{Exponential decay and sharpness} \label{sec:sharp}
The term {\em sharpness} of the phase transition originates from the coincidence of two different definitions of criticality. The first is $\lambda_c$ as
we have defined it in \eqref{e:crit}. The second $\lambda^E_c$ is the point where the quantity $\E_\lambda[|C_o|]$ first becomes infinite. It is immediate that $\lambda_c \geq \lambda_c^E$, since an infinite path from the origin implies an infinite component. If $\lambda_c = \lambda_c^E$, the phase transition might be considered to be sharp, but in fact we use
{\em sharpness} 
for the stronger statement that for all $\lambda < \lambda_c$ the size of
$C_0$ has an exponentially decaying tail (rather than just a finite
mean), in keeping with
common recent usage of the term.

%
As mentioned earlier, we shall prove sharpness by adapting
%
%
the method of  \cite{duminilcopin2015new} to the RCM.
The idea of that proof is to define a functional (traditionally denoted $\varphi$),  for each value of the density parameter,
whose domain is (in the lattice setting of
\cite{duminilcopin2015new}) the collection of all finite
subsets $S$ of the lattice containing the origin.
The functional $\varphi$ outputs
the expected number of sites just outside
$S$ that can be reached through $S$.
If the range of $\varphi$ includes
a value less than 1, then by a  branching process domination
argument one can show exponential decay, while if it does not,
then using the Margulis-Russo formula one can
show the percolation
probability is growing linearly in the density
parameter.

The main difficulty in adapting this to the RCM
setting
lies in finding an appropriate definition and domain of
a $\varphi$-function.
We invite the reader familiar with the method of
\cite{duminilcopin2015new} to guess what the definition and
domain of our function $\varphi$ will be before reading on;
this was not at all obvious to the authors
{\em a priori}.
\subsection{Notation}

Recall that we write $C_x=C(x, \xi_\lambda^x)$ for the cluster of $x$, that is, the collection of all $y\in\mathcal P_\lambda$ such that $x\leftrightarrow y$ in $\xi_\lambda^x$. To be clear, we might write that some event $A$ happens in $\xi[C(x,\xi_\lambda^x)]$, in which case we \emph{do not} resample the edges between vertices in $C(x,\xi_\lambda^x)$.

We call a measurable function $f:\reals^d\to[0,1]$ a \emph{thinning function} if it has bounded support. Let $\mathcal T$ denote the set of all such functions. Given a point process $\eta$ with (independent) markings $U_x\sim \text{Unif}([0,1])$ we define
\begin{equation}
  f_*\eta := \{x\in\eta\mid U_x\leq f(x)\}
\end{equation}
to be the $f$-thinning of $\eta$.
In the case where $\eta = \mathcal P_\lambda$, we take $U_{X_i}
= W_{i}/\lambda$ where the $W_i$ were introduced in Section \ref{ss:DefOver}.


\subsection{Statement of the sharpness result}
To prove sharpness, we define a functional that takes a thinning function as an input and returns a real number:
\begin{equation}\label{e:defphi}
  \varphi_\lambda(f) :=
  \lambda\int_{\reals^d}
  (1-f(y))
  \P[o\leftrightarrow y
    \text{ in }
  \xi[f_*\mathcal P_\lambda\cup\{o,y\}]]
  \d y.
\end{equation}
This value can be interpreted as the expected number of points in $\mathcal P_\lambda\setminus f_*\mathcal P_\lambda$ that can be reached from the origin only using points in $f_*\mathcal P_\lambda$.
We can now define a new critical parameter
\begin{equation}
  \label{e:tlacdef}
  \tilde\lambda_c:=
  \sup\{\lambda\mid\exists f\in\mathcal T:\varphi_\lambda(f)<1\}.
\end{equation}

\begin{figure}[h]
  \centering
  \includegraphics[width=.7\linewidth]{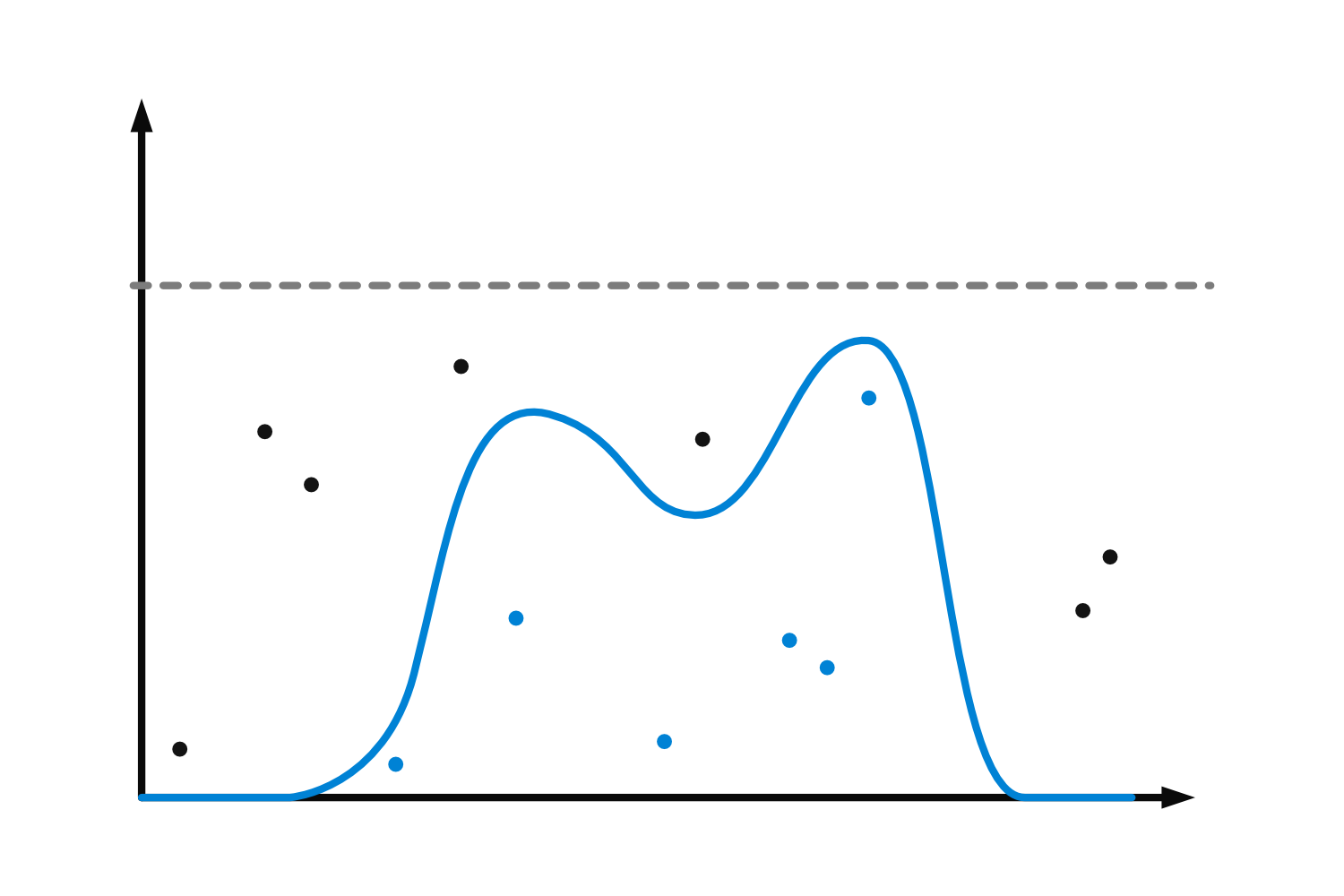}
  \caption{The $y$-axis describes the intensity of each point and the $x$-axis represents space. For some function $f$ and a Poisson point process $\eta$ of intensity $\lambda$. The points marked in blue (below the curve) are the $f$-thinning of $\eta$.}
\end{figure}

\begin{theorem}[Sharpness]\label{thm:sharpness}
  For any $d\geq1$ and any connection function $\psi$ with bounded support,
  it holds that $\tilde\lambda_c=\lambda_c$ and
  \begin{enumerate}[(I)]
    \item \label{itm:first} If $0 < \lambda<\lambda_c$ then there exists some $c= c(\lambda) >0$ such that for all $t >0$,
      $$
      \theta_t(\lambda,\psi)\leq e^{-ct}.
      $$
    \item \label{itm:second} For all $\lambda>\lambda_c$ we have
      $$\theta(\lambda, \psi)\geq \frac{\lambda-\lambda_c}{\lambda}.$$
  \end{enumerate}
\end{theorem}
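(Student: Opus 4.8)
The plan is to establish parts (I) and (II) with $\tilde\lambda_c$ in place of $\lambda_c$; the equality $\tilde\lambda_c=\lambda_c$ then drops out. Indeed, if (I) holds for every $\lambda<\tilde\lambda_c$ then $\theta(\lambda)=\lim_{t}\theta_t(\lambda)=0$ there, so $\tilde\lambda_c\le\lambda_c$; and if (II) holds for every $\lambda>\tilde\lambda_c$ in the form $\theta(\lambda)\ge(\lambda-\tilde\lambda_c)/\lambda>0$, then $\tilde\lambda_c\ge\lambda_c$. Both halves will follow the strategy of \cite{duminilcopin2015new}: a cluster exploration, with the independence supplied by the restriction property of the Poisson process, combined with a renewal inequality for (I) and with the Margulis--Russo formula for (II).

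For (I), note first that, under the coupling of Section \ref{sec:MainRes}, $\lambda\mapsto\varphi_\lambda(f)$ is non-decreasing for each fixed $f\in\mathcal T$ (raising $\lambda$ enlarges $\mathcal P_\lambda$, hence $f_*\mathcal P_\lambda$, can only add connections, and raises the prefactor), so for $\lambda<\tilde\lambda_c$ there is $f\in\mathcal T$ with $a:=\varphi_\lambda(f)<1$. Fix such an $f$ and set $\mathcal C:=C(o,\xi[f_*\mathcal P_\lambda\cup\{o\}])$, the cluster of the origin in the thinned graph; since $f_*\mathcal P_\lambda\subseteq\operatorname{supp}(f)$, $\mathcal C$ lies in a fixed bounded box. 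On $\{o\leftrightarrow\Lambda_t^c\text{ in }\xi_\lambda^o\}$ any connecting path leaves $\mathcal C$; letting $y$ be the vertex immediately after the last one lying in $\mathcal C$, one checks that $y$ is a non-thinned point, that $\{o\leftrightarrow y\text{ in }\xi[f_*\mathcal P_\lambda\cup\{o,y\}]\}$ occurs, and that the rest of the path gives $y\leftrightarrow\Lambda_t^c$ using only points of $\mathcal P_\lambda\setminus\mathcal C$. Summing the indicator of $\{o\leftrightarrow\Lambda_t^c\}$ over such $y$, taking expectations, and applying the Mecke equation to the sum over $\mathcal P_\lambda$, one conditions on the $\sigma$-algebra generated by the exploration that produces $\mathcal C$: the two resulting indicators then depend on disjoint and (by the restriction property) fresh families of marks, so the expectation factorises; the continuation probability is bounded by $\theta_{t-L}(\lambda)$ for a constant $L$ depending only on $\operatorname{supp}(f)$ and $\operatorname{supp}(\conn)$ (translation invariance, edges having length at most one), while the remaining factor is exactly $\varphi_\lambda(f)=a$. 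Thus $\theta_t(\lambda)\le a\,\theta_{t-L}(\lambda)$, and iterating (using that $t\mapsto\theta_t(\lambda)$ is decreasing) gives $\theta_t(\lambda)\le a^{\lfloor t/L\rfloor}$, hence $\theta_t(\lambda)\le e^{-ct}$ for a suitable $c>0$ and all $t$ (using $\theta_t\le1$ for small $t$).

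For (II), fix $\lambda>\tilde\lambda_c$, so $\varphi_\lambda(f)\ge1$ for every $f\in\mathcal T$. Since a path witnessing $\{o\leftrightarrow\Lambda_t^c\}$ may be stopped at its first vertex outside $\Lambda_t$, the indicator of this event (applied to $\xi_\lambda^o$) lives on $\Lambda_{t+2}$, so the Margulis--Russo formula applies and gives
\[
\frac{\del}{\del\lambda}\theta_t(\lambda)=\int_{\Lambda_{t+2}}\P\big[\,o\not\leftrightarrow\Lambda_t^c\text{ in }\xi_\lambda^o,\ o\leftrightarrow\Lambda_t^c\text{ in }\xi_\lambda^{o,x}\,\big]\,\d x.
\]
Mirroring \cite{duminilcopin2015new}, we reveal the cluster of $\Lambda_t^c$, i.e.\ the (bounded) set $W$ of Poisson points of $\Lambda_{t+2}$ joined to $\Lambda_t^c\cap\Lambda_{t+2}$ within $\Lambda_{t+2}$; on $\{o\not\leftrightarrow\Lambda_t^c\}$ we have $o\notin W$, and conditionally on this exploration the unexplored part carries a fresh configuration. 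A point $x$ is pivotal precisely when, after being inserted, it attaches the (fresh complement) cluster of $o$ to $W$; carrying out the $x$-integral with a Mecke computation, and using $\varphi_\lambda(f)\ge1$ for all admissible thinning functions $f$ at the step bounding the integrated pivotal contribution from below, one obtains the differential inequality
\[
\frac{\del}{\del\lambda}\theta_t(\lambda)\ \ge\ \frac{1-\theta_t(\lambda)}{\lambda},\qquad\text{equivalently}\qquad-\frac{\del}{\del\lambda}\log\big(1-\theta_t(\lambda)\big)\ \ge\ \frac1\lambda .
\]
Integrating from $\tilde\lambda_c$ to $\lambda$ gives $1-\theta_t(\lambda)\le\tilde\lambda_c/\lambda$, i.e.\ $\theta_t(\lambda)\ge(\lambda-\tilde\lambda_c)/\lambda$; letting $t\to\infty$ yields (II), first with $\tilde\lambda_c$ and then, once $\tilde\lambda_c=\lambda_c$ is in hand, in the stated form.

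I expect the main obstacle to be the measure-theoretic bookkeeping in these exploration arguments, and especially in (II): one has to specify exactly which marks and which Poisson atoms are revealed when $\mathcal C$ (respectively $W$) is explored, use the restriction property to describe the conditional law of the unexplored part, handle the layer of revealed points lying just outside the cluster, and---the genuinely delicate step---check that the object produced by the integrated pivotal probability is precisely the functional $\varphi_\lambda$, so that the defining inequality $\varphi_\lambda(f)<1$ (respectively $\varphi_\lambda(f)\ge1$) can be invoked. A minor additional point is the justification that the Margulis--Russo formula may be applied to $\mathbf 1\{o\leftrightarrow\Lambda_t^c\}$, which rests on the observation that this event depends on $\xi_\lambda^o$ only through its restriction to $\Lambda_{t+2}$.
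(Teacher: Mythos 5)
Your proposal follows essentially the same route as the paper's proof, and every step you outline has a direct counterpart there: prove (I) and (II) with $\tilde\lambda_c$ in place of $\lambda_c$, so that the equality drops out; for (I), reveal the thinned cluster $\mathcal C=C(o,\xi[f_*\mathcal P_\lambda\cup\{o\}])$, isolate the first non-thinned exit point, invoke the stopping-set/restriction property so the two indicators factorise after conditioning on $\mathcal C$, recognise the resulting $x$-integral as $\varphi_\lambda(f)$, and iterate the renewal inequality; for (II), apply Margulis--Russo, reveal the outer cluster, recognise the conditional law of the complement as a thinning, lower-bound the integrated pivotal contribution using $\varphi_\lambda(\cdot)\ge1$, and integrate the differential inequality. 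The paper packages the boundary cluster via an abstract ghost vertex $g_t$ with $\conn(\cdot,g_t)=\mathbf 1\{\cdot\in\Lambda_{t+2}\setminus\Lambda_t\}$ and writes $\mathcal D=C(g_t,\xi^{o,g_t})$ rather than your $W$, but these are the same object, and your ``$\theta_t\le a\,\theta_{t-L}$ then iterate'' is the paper's $\theta_{kL}\le\varphi_\lambda(f)^k$ up to a change of discretisation.

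One small point in your favour: you note explicitly that $\lambda\mapsto\varphi_\lambda(f)$ is non-decreasing (under the coupling, raising $\lambda$ enlarges $f_*\mathcal P_\lambda$ and raises the prefactor). This observation is actually needed to pass from $\lambda<\tilde\lambda_c$ to the existence of $f$ with $\varphi_\lambda(f)<1$ at that very $\lambda$, since $\tilde\lambda_c$ is defined as a supremum; the paper asserts this step ``by definition'' without recording the monotonicity, so you have filled a small gap. Conversely, the ``genuinely delicate step'' you flag in (II) --- identifying the integrated pivotal probability with $\varphi_\lambda(f^C)$ and checking $f^C$ is an admissible thinning function --- is exactly where the paper's own argument is thinnest (the function $f^C(\cdot)=\prod_{z\in C}(1-\conn(z-\cdot))$ equals $1$, not $0$, far from $C$, so it does not literally have bounded support in the sense the paper's $\mathcal T_t$ requires); your proposal does not resolve this either, but you correctly identify it as the point requiring care, which the paper glosses over.
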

%
\begin{remark}
  By using the definition of $\varphi_\lambda$,
  choosing $f\equiv 0$ we obtain that
  \begin{equation}
    \varphi_{\lambda}(f) = \lambda \int_{\mathbb R^d} \conn(y) \d y
    \quad\text{ and hence }\quad
    \lambda_c\geq \frac{\mathbf{1}}{\int_{\mathbb R^d} \conn(y)\d y}.
  \end{equation}
  This 
  lower bound on $\lambda_c$ is proved by other means in \cite{penrose1991} or \cite{meester1996continuum}. It shows in particular that $\lambda_c >0$, so
  the set of values of $\lambda$ for which Theorem \ref{thm:main_large} applies is non-empty.
\end{remark}
\begin{remark}
  Theorem \ref{thm:sharpness} demenstrates `exponential decay in diameter' of $C_o$ for all $\lambda < \lambda_c$. In Section \ref{ss:expvol} we
  shall
  improve this to `exponential decay in volume' (i.e., in
  the number of vertices of $C_0$), which implies
  $\lambda_c = \lambda_c^E$.
\end{remark}

\begin{remark}
  \label{rk:sharpcontext}
  In the case where $\psi(\cdot)$ is a decreasing function of $\|\cdot\|$ (not necessarily with bounded support),
  the result $\lambda_c = \lambda_c^E$ was proved in
  \cite{meester1996continuum}.
  More recently, $\lambda_c = \lambda_c^E$ was proved for
  a more general class of {\em marked} random connection
  models
  in \cite{caicedo2023critical}, along with a lower bound
  similar to our Theorem \ref{thm:sharpness},
  Item \ref{itm:second}. Our approach yields a simpler proof of $\lambda_c= \lambda_c^E$ than those works, along with
  the exponential decay results of Theorem \ref{thm:sharpness},
  Item \ref{itm:first}, and Lemma \ref{lem:vol_decay} below,
  albeit only for $\psi$ with bounded support.

  A related model of continuum percolation is the {\em Boolean model}
  where balls of fixed or random radius, or more general random shapes, are
  centred on the points of our Poisson point process. Sharpness results for this model have been proved for balls of possibly unbounded radius
  subject to a moment condition \cite{dembin2023almost,duminil2020subcritical}.
  Ziesche
  \cite{ziesche} proves sharpness
  for a more general random shapes subject to a boundedness condition, using the method of
  \cite{duminilcopin2015new};
  we note that the $\varphi$-function
  used by \cite{ziesche} has as its domain a class of subsets of $\mathbb R^d$, whereas in our case we seem to need the domain of $\varphi$ to be a class of thinning functions.
\end{remark}

\subsection{Proof of Theorem \ref{thm:sharpness}}
While our proof of Theorem \ref{thm:sharpness} follows the general scheme of
the proof of sharpness for lattice percolation in \cite{duminilcopin2015new},
it  requires some extra technical ingredients due to working in the continuum,
notably the notions of ghost-vertex and stopping sets below.

Recall that in Theorem \ref{thm:sharpness} we assume $\psi$ has bounded support, i.e. supp$(\psi) \subset \Lambda_K$ for some $K \in (0,\infty)$. We shall prove
Theorem \ref{thm:sharpness} only in the case $K=1$; the result for general $K$
is then easily obtained either by adapting the proof or by using scaling.

We show items \ref{itm:first} and \ref{itm:second} for $\tilde\lambda_c$.
It follows immediately from \ref{itm:first} and \ref{itm:second} that $\tilde\lambda_c=\lambda_c $.
We fix $\lambda\in (0,\lambda_c)$ and set $\eta = \mathcal P_\lambda$.

For any given $t$ we identify $\mathbb R^d \setminus \Lambda_{t+2}$
with an abstract `ghost vertex' $g_t$ which is added to $\xi$ in the same way as other added vertices. We add an edge between $x\in\eta\cap\Lambda_t$ and $g_t$ with probability $\conn(x,g_t)=\mathbf{1}\{x\in\Lambda_{t+2}\setminus \Lambda_t\}$. Then
\begin{equation}\label{eq:ghost}
  \theta_t(\lambda, \psi)=\P[o\leftrightarrow
  (\mathbb R^d \setminus \Lambda_t)]
  = \P[o\leftrightarrow g_t],
\end{equation}
since any vertex in $\Lambda_{t+2} \setminus \Lambda_t$ is by definition of $\conn(\cdot, g_t)$ connected to $g_t$.

We will require the following lemma from \cite[Corollary 3.4]{lace_exp_mean}.
\begin{lemma}[Stopping Set Lemma]\label{cor:stop}
  Let $\lambda >0$. Let $v,u,x \in\reals^d$ be distinct. Then, for $\P[C(v,\xi_\lambda^v) \in \cdot]$-a.e. $A$,
  \begin{equation}
    \P\big[
      u\leftrightarrow x \textup{ in } \xi[(\eta\setminus A)\cup \{u, x\}]
      \mid C(v,\xi^v) = A
    \big]
    =
    \P\big[
      u\leftrightarrow x\textup{ in } \xi[f^A_*\eta\cup\{u, x\}]
    \big],
  \end{equation}
  where $f^A(\cdot) = \prod_{z\in A} (1-\conn(z-\cdot))$ is the probability of not connecting to $A$.
\end{lemma}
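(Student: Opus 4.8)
The plan is to realise $C(v,\xi_\lambda^v)$ as a stopping set for the marked Poisson process $\xi_\lambda^v$ and then to apply the associated strong Markov (spatial Markov) property; this is exactly the mechanism behind \cite[Corollary 3.4]{lace_exp_mean}, which we invoke, so what follows is a sketch of why the identity holds rather than a self-contained proof. First I would set up an exploration (peeling) of the cluster of $v$: starting from $\{v\}$, repeatedly pick a not-yet-explored vertex $z$ of the current cluster, reveal the portion of $\eta$ within unit distance of $z$ together with, for every such Poisson point $w$ with $|z-w|\le 1$, whether $\{z,w\}$ is an edge of $\xi_\lambda^v$, adjoin the newly discovered neighbours to the cluster, and mark $z$ explored; halt once no unexplored vertex remains. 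This is a measurable deterministic functional of $\xi_\lambda^v$ which terminates whenever $C(v,\xi_\lambda^v)$ is finite, and the key structural fact is that the family of Poisson points and pair-marks that it inspects forms a stopping set, i.e.\ whether any given point or pair has been inspected upon termination depends only on the marks already inspected.

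Next I would record what the terminated exploration has disclosed on $\{C(v,\xi_\lambda^v)=A\}$: the positions of all points of $A$; the marks of all pairs internal to $A$ (so in particular that the induced subgraph on $A$ is connected); and, for every Poisson point $w\notin A$ lying within unit distance of $A$, the fact that \emph{every} edge from $w$ to $A$ is absent, since otherwise $w$ would have been absorbed into the cluster. It has disclosed \emph{nothing} about pairs both of whose endpoints lie in $\eta\setminus A$, nor about the presence of Poisson points at distance larger than $1$ from $A$. By the strong Markov property for the marked Poisson process relative to this stopping set, conditionally on the whole terminated exploration the un-inspected pair-marks are i.i.d.\ uniform and the un-inspected portion of $\eta$ is a fresh intensity-$\lambda$ Poisson process, independent of the disclosed data.

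Then I would pass from conditioning on the entire exploration to conditioning only on $\{C(v,\xi_\lambda^v)=A\}$, which amounts to averaging out the random set of boundary points that were inspected and found disconnected from $A$. Given $C(v,\xi_\lambda^v)=A$, a Poisson point $w$ within unit distance of $A$ lies in $\eta\setminus A$ precisely when it is present and happens to miss every edge to $A$, an event of probability $f^A(w)=\prod_{z\in A}(1-\conn(z-w))$, while a point $w$ with $|z-w|>1$ for all $z\in A$ has $f^A(w)=1$ and is simply retained. By the thinning theorem for Poisson processes this says that, conditionally on $C(v,\xi_\lambda^v)=A$, the process $\eta\setminus A$ has the law of $f^A_*\eta$; and, being built entirely from un-inspected data, it carries fresh i.i.d.\ uniform pair-marks and is independent of the internal structure of $A$. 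Adjoining the two fixed points $u$ and $x$ on both sides, the marked graph $\xi[(\eta\setminus A)\cup\{u,x\}]$ conditioned on $\{C(v,\xi_\lambda^v)=A\}$ has the same distribution as $\xi[f^A_*\eta\cup\{u,x\}]$, and the claimed identity for the event $\{u\leftrightarrow x\}$ follows; the $\P[C(v,\xi_\lambda^v)\in\cdot]$-a.e.\ qualifier is the usual one coming from the disintegration.

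The main obstacle is not the combinatorics above but making the stopping-set step rigorous: one needs a serviceable notion of a stopping set for a \emph{marked} Poisson process together with the corresponding strong Markov property, proved via a Palm/Mecke-type disintegration, as well as the measurability of the exploration map. This is precisely the machinery of \cite{lace_exp_mean} (building on \cite{LastZiescheStationary}), which we quote rather than reprove.
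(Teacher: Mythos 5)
The paper does not prove this lemma; it is quoted verbatim as \cite[Corollary 3.4]{lace_exp_mean}, so there is no in-paper argument to compare against. Your sketch correctly identifies the mechanism underlying that citation --- realising $C(v,\xi_\lambda^v)$ as a stopping set via a peeling exploration, invoking the associated spatial strong Markov property, and then using Poisson thinning to show that, conditionally on $\{C(v,\xi^v)=A\}$, the process $\eta\setminus A$ together with its fresh pair-marks has the law of $\xi[f^A_*\eta]$ --- and you appropriately defer to the cited machinery rather than reconstructing it, which is in the same spirit as the paper's treatment.
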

Note that for general applications $f^A$ might not have bounded support.

\subsubsection{Item \ref{itm:first}}
Assume that $\lambda<\tilde\lambda_c$. Then, by definition, we can and do choose a thinning function $f$ such that $\varphi_\lambda(f)<1$. Choose $L>1$ such that $\text{supp}f\subset \Lambda_{L-2}$. This implies that all vertices of $f_*\eta$ are in $\Lambda_{L-2}$. We write $\mathcal C := C(o, \xi[f_*\eta\cup\{o\}])$ for the component of the origin of the thinned process.

We remark that for any finite $C \subset \Lambda_{L-2}$,
event
$\{o\leftrightarrow x \text{ in } \xi[C\cup \{x\}]\}\cap\{\mathcal C=C\}$ is exactly the same event as $\{C\sim x\}\cap\{\mathcal C=C\}$, in particular, we remind the reader that we do not resample the edges when we write $\xi[C]$.

Let $k\in\nats$. The event $\{o\leftrightarrow (\mathbb R^d \setminus \Lambda_{kL}) \textup{ in } \xi_\lambda^o\}$ holds if and only if there exists some
$x\in\eta\setminus f_*\eta$ such that $o\leftrightarrow x$ in
$\xi[f_*\eta\cup\{o,x\}]$ and $x\leftrightarrow g_{KL}$
(the ghost-vertex)
off $\mathcal C$.
Then, by applying Markov's inequality followed by the Mecke equation we find that
\begin{equation}
  \begin{aligned}
    \theta_{kL}(\lambda)
    & \leq
    \E\Big[\sum_{x\in\eta\setminus f_*\eta}
      \mathbf{1}\{o\leftrightarrow x \text{ in } \xi[f_*\eta\cup\{o,x\}]\}
      \mathbf{1}\{x\leftrightarrow
      (\mathbb R^d \setminus \Lambda_{kL}) \text{ off }\mathcal C\}
    \Big]                                        \\
    & =\lambda\int_{\reals^d}
    \P[
      o\leftrightarrow x \text{ in } \xi[f_*\eta\cup\{o,x\}],
      x\leftrightarrow g_{KL} 
      \text{ off } \mathcal C
    ]
    (1-f(x))\d x                                   \\
    & =
    \lambda\int_{\reals^d}\int
    \P[
      o\leftrightarrow x \text{ in } \xi[C\cup \{x\}],
      x\leftrightarrow g_{k L} 
      \text{ in } \xi[(\eta\setminus C)\cup \{x\}]
      \mid
      \mathcal C = C
    ]                                              \\
    & \qquad \P[\mathcal C\in \d C] (1-f(x))\d x.
  \end{aligned}
\end{equation}
Notice that when conditioning on $\mathcal C = C$ the two events become independent. The former depends only on the edges between $C$ and $x$, while the latter only depends on $\eta\setminus C$ and the edges between itself and $x$. If $x\not\in \Lambda_L$ we have $\P[x\sim C] = 0$. By using Lemma \ref{cor:stop}
(taking $u$ to be the ghost-vetex $g_{KL}$ with $v=o$), translation invariance and observing that for a path starting at $x$
(with $x \in \Lambda_L$)
to reach $\mathbb R^d \setminus \Lambda_{kL}$ it must first reach
$\mathbb R^d \setminus \Lambda_{(k-1)L}(x)$ we bound
\begin{equation}
  \P[
    x\leftrightarrow g_{kL}  
    \text{ in } \xi[(\eta\setminus C)\cup \{x\}]
    \mid
    \mathcal C = C
  ]
  \leq
  \P[o\leftrightarrow g_{(k-1)L} 
  \text{ in } \xi^o].
\end{equation}
Applying this to the inequality we find that
\begin{equation}
  \begin{aligned}
    \theta_{kL}(\lambda)
    & \le
    \lambda
    \int_{\mathbb R^d} \int
    \P[
      o\leftrightarrow x \text{ in } \xi[C\cup \{x\}]
      \mid
      \mathcal C = C
    ]                                                 \\ &\qquad \times
    \P[
      x\leftrightarrow g_{k L} 
      \text{ in } \xi[(\eta\setminus C)\cup \{x\}]
      \mid
      \mathcal C = C
    ]
    \P[\mathcal C\in \d C]
    (1-f(x))
    \d x                                              \\
    & \leq
    \lambda\int_{\mathbb R^d}
    \P[
      o\leftrightarrow x \text{ in } \xi[\mathcal C\cup \{x\}]
    ]
    (1-f(x))
    \d x \times
    \P[
      o\leftrightarrow
      g_{(k-1)L}
      \text{ in } \xi^{o}
    ]
    \\
    & = \varphi_\lambda(f) \theta_{(k-1)L}(\lambda).
  \end{aligned}
\end{equation}
The final line is true simply by definition of $\mathcal C$ and $\varphi_\lambda$.
By iteration we find that $
\theta_{kL}(\lambda)\leq \varphi_\lambda(f)^{k-1}.
$
Using  the fact that $\theta_t$ is decreasing in $t$ we obtain
for $kL \leq t \leq (k+1)L$
with $k \in \mathbb N,  k \geq 3$
that
$$
\theta_t(\lambda) \leq \varphi_\lambda(f)^{k-1} \leq \varphi_\lambda(f)^{t/2}.
$$
Also there is a constant $\delta >0$ such that $\theta_t(\lambda) < 1- \delta$
for all $ t \in (0, 3L)$, and hence
item \ref{itm:first} holds for all $\lambda<\widetilde{\lambda_c}$.

\subsubsection{Item \ref{itm:second}}
We will prove \hyperref[thm:sharpness]{item II} by first showing for all $t\geq1$ the lower bound
\begin{equation}\label{eq:diff_ineq}
  \frac{\d}{\d\lambda}\theta_t(\lambda)
  \geq
  \frac{1}{\lambda}\big(\inf_{f\in\mathcal T}\varphi_\lambda(f)\big) (1-\theta_t(\lambda)).
\end{equation}

Write $u\leftrightarrow v$ \emph{through} $x$ if \emph{every} possible path from $u$ to $v$ passes through $x$. We apply Lemma \ref{cor:stop}, with $v$ taken
to be the ghost-vertex $g_t$,
to get the `outside component' (see equation \eqref{eq:ghost}). Write $\mathcal D = C(g_t,\xi^{o,g_t})$ for the outside component (without $x$). The first equality in the next display comes from the \hyperref[par:Margulis]{Margulis-Russo formula} from Section \ref{ssec:standard_tools}. We then marginalize over possible configurations of $\mathcal D$. Finally, we see that for $o\leftrightarrow g_t$ through $x$ we require every path $o\leftrightarrow x$ to avoid $\mathcal D$, and for $x$ to connect to $\mathcal D$. In symbols
\begin{equation}
  \begin{aligned}
    \frac{\d}{\d\lambda}\theta_t(\lambda, \psi)
    & =
    \int_{\Lambda_{t+2}}
    \P[o\leftrightarrow g_t \text{ through } x \text{ in }
      \xi[\eta\cup\{o,x,g_t\}]
      \d x                       \\
      & =
      \int_{\Lambda_{t+2}}\int
      \P[
        o\leftrightarrow g_t \text{ through } x \text{ in } \xi^{o,x,g_t}
        \mid
        \mathcal D = C
      ]
      \P[\mathcal D\in \d C]\d x \\
      & =
      \int_{\Lambda_{t+2}}\int
      \P[
        o\leftrightarrow x\text{ in } \xi[(\eta\setminus C)\cup\{o,x\}],
        x\sim C
        \mid
        \mathcal D = C
      ]                          \\&\qquad\qquad
      \mathbf{1}\{o\not\in C\}
      \P[\mathcal D\in \d C]\d x
    \end{aligned}
  \end{equation}
  Next, we observe that events  $\{o\leftrightarrow x\}$ and
  $\{x\sim C\}$ are conditionally independent, as they rely on a disjoint set of edges. Then we will be able to apply Lemma \ref{cor:stop}. We first define
  \begin{equation}
    \mathcal T_t :=
    \{
      f:\reals^d\to[0,1] \text{ measurable}
      \mid
      \text{supp}(f)\subset\Lambda_t
    \},
  \end{equation}
  over which we will take an infimum. By definition $g_t$ is always in $\mathcal D$. For any possible configuration $C$ of $\mathcal D$ we have $\P[x\sim C]\geq \conn(x,g_t)$, and in particular
  $f^C(x) = 0 $ for $x \in \Lambda_{t+2} \setminus \Lambda_t$.
  Hence,

  \begin{align}
    \frac{\d}{\d\lambda}\P[o\leftrightarrow g_t]
    & =
    \int_{\Lambda_{t+2}}\int
    \P[
      o\leftrightarrow x\text{ in } \xi[(\eta\setminus C)\cup\{o,x\}]
      \mid
      \mathcal D = C
    ]                          \\
    & \qquad \qquad \mathbb P[
      x\sim C
    ]
    \mathbf{1}\{o\not\in C\}
    \P[\mathcal D\in \d C]\d x
    \\
    & =
    \int_{\Lambda_{t+2}}\int
    \P[
      o\leftrightarrow x\text{ in }
      \xi[f_*^C\eta\cup\{o,x\}]
    ]
    (1 - f^C(x))
    \mathbf{1}\{o\not\in C\}
    \P[\mathcal D\in \d C]\d x \\
    & \geq
    \inf_{f\in\mathcal T_t}
    \int_{\Lambda_{t+2}}\int
    \P[
      o\leftrightarrow x\text{ in }
      \xi[f_*\eta\cup\{o,x\}]
    ]
    (1 - f(x))
    \mathbf{1}\{o\not\in C\}
    \P[\mathcal D\in \d C]\d x.
  \end{align}
  We can expand the domain of integration to all of $\reals^d$, since the integrand is $0$ outside of $\Lambda_{t+2}$. Since $\mathcal T_t\subset \mathcal T$, we can  bound the $\inf_{f\in\mathcal T_t}$ by $\inf_{f\in\mathcal T}$ from below. We find that
  \begin{align}
    \frac{\d}{\d\lambda}\P[o\leftrightarrow g_t]
    & \geq
    \frac1\lambda
    \inf_{f\in\mathcal T_t}
    \lambda
    \int_{\reals^d}
    \P[
      o\leftrightarrow x\text{ in }
      \xi[ f_*\eta\cup\{o,x\}]
    ]
    (1 - f(x))
    \d x                        \\
    & \qquad\qquad\times
    \int \mathbf{1}\{o\not\in C\}
    \P[\mathcal D\in \d C]      \\
    & \geq \label{eq:integral}
    \frac1\lambda
    \inf_{f\in\mathcal T} \varphi_\lambda(f)
    \int
    \mathbf{1}\{o\not\in C\}
    \P[\mathcal D\in \d C]      \\
    & =
    \frac1\lambda
    \inf_{f\in\mathcal T} \varphi_\lambda(f)
    \P[o\nleftrightarrow g_t],
  \end{align}
  where to get \eqref{eq:integral} from the previous line we apply the definition
  \eqref{e:defphi} of $\varphi_\lambda$ and take the $\inf$ over the larger set $\mathcal T$. This gives us \eqref{eq:diff_ineq}.

  We now derive item \ref{itm:second} from \eqref{eq:diff_ineq}. Note that by the
  definition of $\tilde\lambda_c$ at \eqref{e:tlacdef}
  we know that for all $\lambda>\tilde\lambda_c$ and all $f\in\mathcal T$ we have $\varphi_{\lambda}(f)\geq 1$. Then
  \begin{equation}
    \frac\d{\d\lambda}\theta_t(\lambda)
    \geq
    \frac1\lambda (1-\theta_t(\lambda)).\\
  \end{equation}
  We divide both sides by $1-\theta_t$ and integrate from $\tilde\lambda_c$ to $\lambda$ to obtain
  \begin{equation}
    \int_{\tilde\lambda_c}^\lambda
    \frac{\theta'_t(\lambda')}{1-\theta_t(\lambda')}\d \lambda'
    \geq
    \int_{\tilde\lambda_c}^\lambda
    \frac1{\lambda'}
    \d \lambda'.
  \end{equation}
  By $u$-substitution with $u=1-\theta_t(\lambda')$ we recover
  \begin{equation}
    -\log(1-\theta_t(\lambda)) + \log(1-\theta_t(\tilde\lambda_c))
    \geq
    \log(\frac\lambda{\tilde\lambda_c}).
  \end{equation}
  By rearranging we get $\tilde\lambda_c (1-\theta_t(\tilde\lambda_c))\geq \lambda (1-\theta_t(\lambda))$, and hence
  \begin{equation}
    \theta_t(\lambda)
    \geq
    \frac
    {\lambda-\tilde\lambda_c(1-\theta_t(\tilde\lambda_c))}
    {\lambda}.
  \end{equation}
  Now we can let $t\to\infty$ and using the fact that $(1-\theta(\tilde\lambda_c))\leq 1$ we recover
  \begin{equation}
    \theta(\lambda)
    \geq
    \frac
    {\lambda-\tilde\lambda_c}
    \lambda.
  \end{equation}

  \section{Proof of Theorem \ref{thm:main_large}}\label{sec:LargeComp}
  We now have the tools to show Theorem \ref{thm:main_large}.
  We assume throughout this section that $\text{supp}(\psi) \subset \Lambda_1$
  (again, it is straightforward to adapt the arguments to general  finite-range $\psi$).
  Remember we are trying to show for $\lambda\in(0,\lambda_c)$ that
  $$
  \frac{|L_1(\xi_{\lambda}\cap \Lambda_t)|}{\log t}
  \longrightarrow
  \frac d {\zeta(\lambda)} \ \text{ in probability.}
  $$

  We will first use exponential decay of the $t$-percolation probability to prove exponential decay of the number of vertices in the component containing the origin. We then use this fact to show that the volume decay rate
  $\zeta(\lambda)$ is well defined, continuous and decreasing.

  \subsection{Exponential decay in volume}
  \label{ss:expvol}
  To show the volume decay rate is strictly positive,
  we need an exponential bound on the number of vertices in the component containing the origin. We provide this in Lemma \ref{lem:vol_decay} below.
  Before proving this in detail,  we describe our strategy. We divide
  $\mathbb R^d$ into boxes of large fixed side-length $\kappa$. We say a box
  is {\em good} if there is a path in the RCM from inside the box to
  some other box exactly 2 steps away (allowing diagonal steps).
  By choosing $\kappa$ large enough, using Lemma
  \ref{lem:boxboxcrossing} below (a consequence of sharpness) we can make the probability of a box being good very small. Then by classical path-counting arguments the  cluster at the origin  of good boxes has an exponentially decaying tail.
  By a Chernoff estimate on the Poisson distribution, the probability of a  particular cluster of boxes containing
  a larger-than expected number of Poisson points
  also has an exponentially decaying tail, and combining these two exponential tail estimates gives us the result.


  \begin{lemma} \label{lem:boxboxcrossing}
    Suppose $\lambda<\lambda_c$. Let $b\geq a > 0$. Then
    \begin{equation} \label{e:annulus}
      \P_{\lambda}[\Lambda_a\leftrightarrow (\mathbb
        R^d \setminus \Lambda_b)
      ]
      \leq
      \lambda a^d \exp(-c (b-a))
    \end{equation}
    where $c=c(\lambda)$ is the same constant as in
    Theorem \ref{thm:sharpness}.
  \end{lemma}

  \begin{proof}
    We apply the Markov inequality followed by the Mecke equation. We find that
    \begin{align*}
      \P_{\lambda}[
        \Lambda_a\leftrightarrow (\mathbb R^d \setminus \Lambda_{b})
      ]
      & \leq
      \E_{\lambda} \Big[
        \sum_{x\in\mathcal P_{\lambda} \cap \Lambda_a}
        \mathbf{1}_{\{ x\leftrightarrow (\mathbb R^d \setminus \Lambda_{b})
        \text{ in } \xi_{\lambda} \}}
      \Big]                                                                                    \\
      & = \lambda \int_{\Lambda_a}
      \P[x\leftrightarrow (\mathbb R^d \setminus \Lambda_{b}) \text{ in } \xi_\lambda^x]
      \d x                                                                                 \\
      & \leq \lambda \text{Leb}(\Lambda_a) \P_{\lambda}[o\leftrightarrow
      (\mathbb R^d \setminus \Lambda_{b-a}) \text{ in } \xi_\lambda^x ],
    \end{align*}
    where Leb denotes Lebesgue measure.
    Applying
    Theorem \ref{thm:sharpness} we obtain \eqref{e:annulus}.
  \end{proof}

  \begin{lemma}[Exponential decay of $|C_o|$] \label{lem:vol_decay}
    Given $\lambda\in(0,\lambda_c)$ there exist constants $\tilde C,\tilde c>0$ such that for $n$ sufficiently large we find
    \begin{equation}
      \P[|C_o|\geq n]\leq \tilde C\exp(-\tilde c n).
      \label{e:expCo}
    \end{equation}
  \end{lemma}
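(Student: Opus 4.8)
The plan is to show that if the component of the origin is large in cardinality, then it must also be large in spatial diameter, at which point Lemma \ref{lem:boxboxcrossing} (equivalently Theorem \ref{thm:sharpness}(I)) gives the exponential bound. The quantitative link between cardinality and diameter comes from a Poisson concentration estimate: a bounded region cannot contain many more Poisson points than its volume times $\lambda$, except with exponentially small probability.

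More precisely, first I would fix $\lambda < \lambda_c$ and let $c = c(\lambda)$ be the constant from Theorem \ref{thm:sharpness}. For a parameter $R = R(n)$ to be chosen (growing like a small constant times $n^{1/d}$, or more carefully $\kappa n$ for small $\kappa$ — see below), decompose the event $\{|C_o| \ge n\}$ according to whether $C_o \subset \Lambda_R$ or not:
\begin{equation}
    \P[|C_o| \ge n]
    \le
    \P[o \leftrightarrow \Lambda_R^c]
    +
    \P[|C_o \cap \Lambda_R| \ge n].
\end{equation}
The first term is at most $\lambda \exp(-c(R-2)) \cdot (\text{const})$ by Theorem \ref{thm:sharpness}(I) (translation of the origin, or directly via Lemma \ref{lem:boxboxcrossing} with $a=2$), so it decays exponentially in $R$. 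For the second term, note that $|C_o \cap \Lambda_R| \le |\mathcal{P}_\lambda \cap \Lambda_R| + 1$ (the $+1$ for the extra point $o$), and $|\mathcal{P}_\lambda \cap \Lambda_R|$ is Poisson with mean $\lambda R^d$; a standard Chernoff bound for the Poisson distribution gives $\P[|\mathcal{P}_\lambda \cap \Lambda_R \cup \{o\}| \ge n] \le \exp(-n)$ whenever $n \ge e^2 \lambda R^d + 1$, say. Thus, choosing $R^d = n/(e^2 \lambda)$ (so $R$ is of order $n^{1/d}$), both terms are bounded: the first by $\tilde C \exp(-c' n^{1/d})$ and the second by $\exp(-n)$. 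This already yields a stretched-exponential bound $\P[|C_o| \ge n] \le \tilde C \exp(-\tilde c\, n^{1/d})$, which for $d \ge 2$ is weaker than \eqref{e:expCo}.

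To upgrade to a genuine exponential bound in $n$ (which is what the lemma claims), the trick is iteration / a multiscale or greedy argument rather than a single scale $R$. The idea: if $|C_o| \ge n$, tile $\reals^d$ by boxes of a \emph{fixed} side-length $\ell$ (chosen large enough that the probability a given box is connected to the complement of a concentric box of side $2\ell$ is small, using Lemma \ref{lem:boxboxcrossing}), and observe that a component with $n$ vertices must touch at least $\Omega(n)$ distinct boxes — because each box, with overwhelming probability, contains $O(\ell^d)$ Poisson points, so to accumulate $n$ vertices one needs $\gtrsim n/\ell^d$ boxes. One then runs a Peierls-type / block argument: the component contains a connected subtree of occupied boxes of size $\gtrsim n/\ell^d$, and a union bound over such subtrees (which number at most $C^{n/\ell^d}$ by the standard animal-counting bound) against the per-box crossing probability $e^{-c\ell}$ — made arbitrarily small by taking $\ell$ large — gives $\P \le (C e^{-c\ell})^{n/\ell^d} \le e^{-\tilde c n}$. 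The "except with small probability each box has $O(\ell^d)$ points" event is handled by a separate Poisson Chernoff bound summed over the relevant boxes. I expect the main obstacle to be the bookkeeping in this block argument: carefully defining the occupied-box cluster, justifying that it is connected and large, and controlling the animal count together with the Poisson overcrowding event simultaneously. The reference \cite{duminil-intro} presumably organizes exactly this argument for lattice percolation, and the adaptation is mostly a matter of replacing "vertex" by "Poisson point in a box" and invoking Lemma \ref{lem:boxboxcrossing} for the crossing input; the subcriticality of $\lambda$ enters only through that lemma's exponential rate $c(\lambda)$.
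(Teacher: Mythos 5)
Your plan is essentially the paper's argument: renormalize onto a coarse lattice of side-$\kappa$ boxes, control the Poisson point count per box via a Chernoff bound (so that, on the complement of an overcrowding event, a component of size $n$ must touch $\gtrsim n/\kappa^d$ boxes, whose union forms a lattice animal), and run a Peierls-type union bound over animals against the per-box annular crossing estimate from Lemma~\ref{lem:boxboxcrossing}. The one step you gloss over, and which the paper handles explicitly, is that the crossing events for nearby boxes are \emph{not} independent since their defining annuli overlap, so the bound $\P\leq (C e^{-c\ell})^{n/\ell^d}$ you write does not follow directly from a union bound; the paper fixes this by passing to a maximal stable subset $T(A)$ of each animal $A$, of cardinality at least $(|A|-1)/D$ with $D = 3^d - 1$ the degree of the renormalized lattice, whose crossing annuli are disjoint and hence genuinely independent --- this only costs a factor $D$ in the exponent and the conclusion is unchanged.
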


  \begin{proof}
    We implement the strategy described above.
    The first part of the argument is similar to
    e.g.  \cite[Theorem 3.7]{duminil-intro} but the last part is specific to our continuum setting. Recall the notation $\Lambda_k(x) := x + [-\frac k2,\frac k2]^d$. We remind the reader that the notation $C_x$ refers to the connected component of $\xi_\lambda^x$ containing $x$.

    Let $\kappa > 4$, to be chosen later. We define a new graph which we will call $\mathcal H_\kappa$ with vertex set $\kappa \ints^d$ and edges between vertices $x,y\in \mathcal H_\kappa$ if and only if $\|x-y\|_\infty \leq\kappa$; this graph has degree $D=3^d-1$, where in particular $D$ is independent of $\kappa$.

    We call a finite connected set of vertices of $\mathcal H_\kappa$ an \emph{animal}. We denote by $\mathcal A(m)$ the set of animals that contain the origin and are of cardinality $m$. For each animal $A\in\mathcal A(m)$ let $T(A)$ be a maximal stable set of sites in $A$. That is the largest collection of $x\in A$ such that no two $x,y\in T(A)$ share an edge. In the case of multiple possible such sets any tie breaker will work.

    We shall say that a vertex $x$ in $\mathcal H_\kappa$ is \emph{good} if the event $\{\Lambda_{\kappa}(x)\leftrightarrow \Lambda_{3 \kappa/2}(x)^c \inxi{\lambda}^o\}$ occurs. We shall say an animal $A$ is good if every $x\in A$ is good.

    Let $c'\in(0,1)$, to be chosen later.
    We set $m:=m(n) := \lfloor \frac{c'n}{\lambda\kappa^d}\rfloor$. Let $F_m$ be the event that there exists some $A\in\mathcal A(m)$ such that $A$ is good. Then
    \begin{align}
      \P[|C_o|\geq n]
      \leq
      \P[ F_m] + \P[|C_o|\geq n ,  F_m^c].
      \label{e:splitevent}
    \end{align}

    First we bound $\P[|C_o|\geq n ,  F_m^c]$.
    We define $C^*_o:=\{x\in\kappa\ints^d\ |\ \Lambda_\kappa(x)\cap C_o\neq \varnothing\}$, to be the minimal animal that contains $C_o$.
    Then $C_o^*$ is connected, $o \in C_o^*$ and if $|C_o^*| > D+1$, then
    all sites in $C_o^*$ are good.
    It follows that if $m \geq D+1$ and $|C_o^*| \geq m$ then $F_m$ occurs.
    Also if $|C_o^*| < m$ then there exists at least one
    animal $A \in {\cal A}(m)$ with $C_o^* \subset A$.
    Hence, if $n$ is large enough so that $m \geq D+1$, then
    %

    %
    %
    %
    \begin{align*}
      \P[|C_o|\geq n, F_{m}^c]
      & \leq
      \P[|C_o| \geq n, |C_o^*| < m]
      \\
      & \leq
      \P\bigg[
        \bigcup_{A\in\mathcal A(m)}
        \{ \eta\big(\cup_{x\in A} \Lambda_\kappa(x)\big)\geq n
        \}
      \bigg]  \\
      & \leq
      \sum_{A\in\mathcal A(m)}
      \P[\text{Pois}(\lambda \kappa^d|A|)\geq n].
    \end{align*}
    By \cite[Lemma 9.3]{penrose2003random} we know that $|\mathcal A(m)|\leq 2^{Dm}$. Hence,
    \begin{align*}
      \P[|C_o|\geq n, F_{m(n)}^c]
      & \leq
      2^{mD}
      \P[\text{Pois}(\lambda m \kappa^d)\geq n] \\
      & \leq
      2^{mD}
      \P[\text{Pois}(c'n)\geq n].
    \end{align*}
    Assume $c' \leq e^{-4}$. Then
    by
    \cite[Lemma 1.2, eq. (1.12)]{penrose2003random}
    we have that $ \P[\text{Pois}(c'n)\geq n] \leq e^{-2n}$.
    Assume also that $c' \leq \lambda \kappa^d/(D \log 2)$; then
    $$
    2^{mD} \leq \exp( (D \log 2) c'n /(\lambda \kappa^d) ) \leq e^{n}.
    $$
    Combining these estimates yields
    \begin{equation}\label{eq:exp_bound_eq1}
      \P[|C_o|\geq n, F_m^c]
      \leq
      \exp(-n).
    \end{equation}

    Next we bound $\P[F_m]$. We use the fact that the goodness of sites in $T(A)$ are independent, since the boxes that define them do not intersect.
    It is always possible to find a set $T(A)$ of cardinality at least $(m-1)/D$. Such a set can be constructed iteratively by using a `greedy' approach; subsequently adding vertices adjacent to neighbors of already added vertices in such a way there they are not a direct neighbor of any already chosen vertex. Then by the union bound,
    \begin{align*}
      \P[F_m]
      & \leq
      \sum_{A\in\mathcal A(m)}
      \P[A\text{ is good}  ]                                               \\
      & \leq
      \sum_{A\in\mathcal A(m)}
      \P[\forall x\in T(A): x\text{ is good}]                             \\
      & =
      \sum_{A\in \mathcal A(m)}
      \P[\Lambda_{\kappa} \leftrightarrow (\mathbb R^d \setminus
      \Lambda_{3\kappa/2})]^{|T(A)|} \\
      & \leq
      |\mathcal A(m)| \P[\Lambda_\kappa \leftrightarrow
      (\mathbb R^d \setminus \Lambda_{3\kappa/2})]^{m/D}.
    \end{align*}
    We can now apply Lemma \ref{lem:boxboxcrossing} and pick $\kappa>2$ sufficiently large such that
    $$
    \P[\Lambda_{\kappa}\leftrightarrow (\mathbb R^d \setminus
    \Lambda_{3\kappa/2})]
    \leq
    \lambda \kappa^d \exp(-c\kappa/2)
    \leq
    \frac1e\cdot 2^{-D^2}.
    $$
    Using $|A(m)| \leq 2^{Dm}$ again, we thus
    find that
    \begin{equation}\label{eq:exp_bound_eq2}
      \begin{aligned}
        \P[F_m]
        & \leq
        2^{mD}
        (
          e^{-m/D}
          2^{-mD}
        )
        \leq
        \exp(-c'n/(2 \lambda \kappa^d D)).
      \end{aligned}
    \end{equation}
    Putting \eqref{e:splitevent}, \eqref{eq:exp_bound_eq1} and \eqref{eq:exp_bound_eq2} together gives us \eqref{e:expCo}.
  \end{proof}

  \subsection{Existence and properties of the volume decay rate}\label{sec:LogDecay}
  To prove
  \autoref{thm:main_large}, the only missing ingredient is a more detailed understanding of the volume decay rate $\zeta$. To get this,
  we shall use the following lemmata
  from
  \cite{feketeWurz} and
  \cite{dwass} respectively.
  \begin{lemma}[Fekete's Subadditivity Lemma] \label{lem:Fekete}
    Let $(u_n)_n\subset \reals$ be a sequence of numbers. If for all $n$ and $m$ we have $u_{n+m}\leq u_n + u_m$, then
    $$
    \lim_{n\to\infty} \frac{u_n}{n}
    =
    \inf_n \frac{u_n}{n}
    \in
    [-\infty, \infty).
    $$
  \end{lemma}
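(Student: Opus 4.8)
The plan is the standard proof of Fekete's lemma. First I would set $L := \inf_{n\geq 1} u_n/n$. Since the sequence is indexed by positive integers, $u_1$ is a finite real number, so $L \leq u_1 < \infty$ and hence $L \in [-\infty,\infty)$. Moreover, directly from the definition of the infimum, $\liminf_{n\to\infty} u_n/n \geq L$. So the whole statement reduces to showing $\limsup_{n\to\infty} u_n/n \leq L$.

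To prove that, I would fix an arbitrary index $k\geq 1$ and, for $n > k$, write $n = q_n k + r_n$ with $q_n = \lfloor n/k\rfloor \geq 1$ and $r_n \in\{0,1,\dots,k-1\}$. Iterating the subadditivity hypothesis gives $u_{q_n k} \leq q_n u_k$, and one further application (when $r_n\geq 1$) gives $u_n \leq q_n u_k + u_{r_n}$; when $r_n = 0$ one simply has $u_n\leq q_n u_k$. Since $r_n$ ranges over the finite set $\{1,\dots,k-1\}$, the quantity $u_{r_n}$ is bounded above by a constant $M_k$ depending only on $k$, so dividing by $n$,
\[
    \frac{u_n}{n} \;\leq\; \frac{q_n}{n}\,u_k + \frac{M_k}{n}
    \qquad (n > k).
\]
Letting $n\to\infty$ and using $q_n/n \to 1/k$ together with $M_k/n\to 0$ yields $\limsup_{n\to\infty} u_n/n \leq u_k/k$.

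Since $k$ was arbitrary, $\limsup_{n\to\infty} u_n/n \leq \inf_{k\geq 1} u_k/k = L$; combined with the trivial lower bound this shows $\lim_{n\to\infty} u_n/n$ exists and equals $L \in [-\infty,\infty)$. I do not expect a genuine obstacle here — the argument is entirely elementary. The only points demanding a line of care are the index bookkeeping for the remainder $r_n$ (and the edge case $r_n=0$) and the observation that $u_k$ may be negative, so the term $q_n u_k$ must be kept rather than discarded in the bound; also, when $L = -\infty$ the same chain of inequalities shows $\limsup_n u_n/n$ lies below every real number, so the limit is $-\infty$ and no separate case analysis is required.
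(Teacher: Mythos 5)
Your argument is the standard, correct proof of Fekete's lemma: fix $k$, perform Euclidean division $n = q_n k + r_n$, iterate subadditivity to get $u_n \leq q_n u_k + M_k$, and take $n\to\infty$ before infimizing over $k$; the edge cases ($r_n = 0$, $u_k < 0$, $L = -\infty$) are all handled correctly. The paper does not give its own proof of this lemma — it simply cites an external reference — so there is nothing in the source to compare against; your self-contained argument is a fine substitute.
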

  \begin{lemma}[Dwass' formula]\label{thm:dwass}
    Let $\tau$ be a Galton-Watson tree with offspring distribution $\nu$. Let $|\tau|$ be its total progeny. We let $N_k := X_1+\dots+X_k$ where $X_j\sim \nu$ iid.  Then
    \begin{equation}
      \P[|\tau| = k]=\frac1k \P[N_k=k-1].
    \end{equation}
  \end{lemma}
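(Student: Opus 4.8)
The plan is to prove Dwass' formula by the classical \emph{cycle lemma}, after encoding the Galton--Watson tree through a breadth-first exploration. First I would set up the standard coding: let $(X_i)_{i\ge 1}$ be i.i.d.\ with law $\nu$, put $Z_0 := 1$ and $Z_j := Z_{j-1} - 1 + X_j = 1 - j + \sum_{i=1}^{j} X_i$, so that $Z_j$ is the number of vertices still ``active'' after the first $j$ have been explored. Then $|\tau| = \min\{k \ge 1 : Z_k = 0\}$ (with $\min\varnothing = \infty$), and since $k$ being the \emph{first} zero of $Z$ forces $Z_j \ge 1$ for $j < k$, one gets the exact identity
\[
\{|\tau| = k\} = \Bigl\{ \sum\nolimits_{i=1}^{j} X_i \ge j \text{ for } 1 \le j \le k-1 \Bigr\} \cap \Bigl\{ \sum\nolimits_{i=1}^{k} X_i = k-1 \Bigr\}.
\]
Setting $y_i := X_i - 1 \ge -1$ and $\sigma_j := y_1 + \dots + y_j$, the right-hand side says precisely that the deterministic sequence $(y_1,\dots,y_k)$ has $\sigma_j \ge 0$ for $j \le k-1$ and $\sigma_k = -1$; call such a sequence \emph{admissible}.

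Next I would invoke the cycle lemma (Dvoretzky--Motzkin): for every deterministic sequence of integers $y_1,\dots,y_k \ge -1$ with $\sum_{i=1}^{k} y_i = -1$, exactly one of the $k$ cyclic shifts $(y_{j+1},\dots,y_k,y_1,\dots,y_j)$, $0 \le j \le k-1$, is admissible. (The $k$ shifts are genuinely distinct, since a sequence of sum $-1$ cannot have a nontrivial period; this is the only spot where periodicity must be ruled out.) Applying this to the random sequence $(y_i) = (X_i - 1)$: admissibility of any shift forces $\sum_i y_i = -1$, i.e.\ $N_k = k-1$, and on $\{N_k = k-1\}$ exactly one shift is admissible. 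Hence, writing $G_j$ for the event that the $j$-th cyclic shift of $(X_1-1,\dots,X_k-1)$ is admissible, we obtain a \emph{disjoint} decomposition $\{N_k = k-1\} = \bigsqcup_{j=0}^{k-1} G_j$.

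Finally I would use exchangeability. Since $X_1,\dots,X_k$ are i.i.d., the cyclically shifted vector $(X_{j+1},\dots,X_k,X_1,\dots,X_j)$ has the same law as $(X_1,\dots,X_k)$, so $\P[G_j] = \P[G_0]$ for all $j$; and $G_0 = \{(X_1-1,\dots,X_k-1)\text{ admissible}\} = \{|\tau| = k\}$ by the first step. The disjoint decomposition then yields $\P[N_k = k-1] = \sum_{j=0}^{k-1}\P[G_j] = k\,\P[|\tau| = k]$, which is the asserted formula.

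I expect the main difficulty to be expository rather than substantive: writing the exploration coding so that the identity $\{|\tau| = k\} = G_0$ is transparently correct (including the boundary cases $k = 1$ and the event $\{|\tau| = \infty\}$), and stating or citing the cycle lemma itself. If one prefers to bypass the combinatorics entirely, there is a purely analytic route: the probability generating function $g(s) := \E[s^{|\tau|}]$ satisfies $g(s) = s\,f(g(s))$, where $f$ is the offspring generating function, so Lagrange's inversion formula gives that the coefficient of $s^k$ in $g$ equals $\tfrac1k$ times the coefficient of $w^{k-1}$ in $f(w)^k = \E[w^{X_1+\dots+X_k}]$, i.e.\ $\P[|\tau| = k] = \tfrac1k\,\P[N_k = k-1]$; this version only requires citing Lagrange inversion and may be the cheapest to write down.
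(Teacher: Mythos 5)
Your proof is correct, and in fact it supplies more than the paper does: in the source the authors do not prove Dwass' formula at all, they state the lemma and cite Dwass' original paper directly (``We use the following theorem by \cite{dwass}''). So there is no in-paper argument to compare against; what you have written is a self-contained replacement for that citation.

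Both routes you sketch are standard and sound. The exploration-plus-cycle-lemma argument is carefully set up: the depth/breadth-first coding $Z_j = 1 - j + \sum_{i\le j} X_i$ with $|\tau|=\inf\{k: Z_k = 0\}$ is correct (and handles $|\tau|=\infty$ by $\inf\varnothing=\infty$), the translation into $y_i = X_i-1$ with $\sigma_j \ge 0$ for $j<k$ and $\sigma_k=-1$ is exactly admissibility, and the aperiodicity observation (a sequence with sum $-1$ cannot have a proper period, since a period $p\mid k$, $p<k$, would force the total sum to be divisible by $k/p\ge 2$) is the one place where people often hand-wave, and you nailed it. Cyclic exchangeability of i.i.d.\ sequences then gives $\P[G_j]=\P[G_0]=\P[|\tau|=k]$, and the disjoint union $\{N_k=k-1\}=\bigsqcup_{j} G_j$ closes the argument. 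The Lagrange-inversion alternative via $g(s)=s\,f(g(s))$ and $[s^k]g = \tfrac1k[w^{k-1}]f(w)^k$ is equally valid and, as you note, shorter to state if one is willing to cite Lagrange inversion as a black box. Either version would be a perfectly good appendix proof for this lemma; given the paper already proves a Chernoff-type bound in an appendix, the combinatorial version is probably more in keeping with its style.
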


  We recall Definition \ref{def:cor-length}.
  \begin{align}
    \zeta(\lambda):=
    \zeta(\lambda, \conn)
    : & =
    \lim_{n\to\infty}
    -\frac1n \log \P[|C_o|= n\text{ in }\xi_{\lambda}^o]\tag*{\eqref{eq:def-cor-1}}
    \\
    & =
    \lim_{n\to\infty}
    -\frac1n \log \P[
      n\leq|C_o|< \infty \text{ in }\xi_{\lambda}^o
    ].\tag*{\eqref{eq:def-cor-2}}
  \end{align}

  \begin{theorem} \label{thm:zeta-prop}
    Let $\lambda\in(0,\lambda_c)$.
    The volume decay rate $\zeta(\lambda)$ is well defined in
    \eqref{eq:def-cor-1} and \eqref{eq:def-cor-2}.
    It is continuous and non-increasing in $\lambda$. As $\lambda$ goes to zero $\zeta(\lambda)\to \infty$.
  \end{theorem}

  In \cite[Theorem 10.1]{penrose2003random} the above is shown for the special case where $\conn(x)=\mathbf{1}\{\|x\|\leq 1\}$. Our proof in the general case follows a similar scheme to that in \cite{penrose2003random},
  with some differences that we shall point out as we go along.
  (Note that the e-book version of \cite{penrose2003random} contains many typos that are not present in the print version.)
  To prove Theorem \ref{thm:zeta-prop},
  first we show convergence of \eqref{eq:def-cor-1}. We then show equality of \eqref{eq:def-cor-1} and \eqref{eq:def-cor-2}.  Then we show continuity and the limiting behavior as $\lambda$ goes to zero.
  \begin{lemma}[$\zeta$ is well defined]\label{lem:conv-zeta}
    The limit \eqref{eq:def-cor-1} exists, and $\zeta(\lambda)\in[0,\infty)$.
  \end{lemma}
  \begin{proof}
    To prove convergence we use \nameref{lem:Fekete}. We define $p_n:=\P[|C_o(\xi_{\lambda}^o)|= n]$. Showing subadditivity for $-\log p_n$ is equivalent to showing supermultiplicativity for $p_n$. We will show $p_{n+1}p_{m+1}\leq p_{n+m+1}$ which implies Lemma \ref{lem:conv-zeta} by a simple reindexing via $q_n=p_{n+1}$.

    We know by \cite{penrose1991} (see also \cite[Proposition 6.2]{meester1996continuum}) that
    $$
    p_{n+1}
    =
    {\lambda^n}
    \int_{(\mathbb R^d)^n}
    h_2(o,x_1,\dots, x_n)
    \times
    \exp \Big(
      -\lambda
      \int_{\mathbb R^d}
      h_1(y; o, x_1,\dots, x_n)
      \d y
    \Big)
    \d\vec x,
    $$
    where $h_1(y;z_1,\dots, z_k)$ is the probability that $y$ is not isolated from the points $z_1,\dots, z_k$ in $\xi[\{y, z_1,\dots, z_n\}]$ and $h_2(z_1,\dots, z_k)$ is the probability that $z_1,\dots, z_k$ are in one connected component and ordered $z_2<\dots<z_n$ for some order (e.g. lexicographic) in $\xi[\{z_1,\dots, z_k\}]$.
    Note that in the situation of \cite[Theorem 10.1]{penrose2003random}
    the functions $h_1$ and $h_2$ are indicator functions, whereas here they have to be considered as probabilities.

    We use $\vec z$ to represent $(z_1,\dots, z_k)$.
    Combining we find
    \begin{align*}
      p_{n+1}p_{m+1}
      & =
      \lambda^{n+m}
      \int_{(\mathbb R^d)^n}
      \int_{(\mathbb R^d)^m}
      h_2(o,x_1,\dots, x_n)
      h_2(o,y_1,\dots, y_m)
      \\&\quad
      \times    
      \exp\bigg(
        -\lambda
        \Big[
          \int_{\mathbb R^d}
          h_1(z;o,x_1,\dots,x_n)
          \d z
          +
          \int_{\mathbb R^d}
          h_1(z;o,y_1,\dots,y_m)
          \d z
      \Big]\bigg)
      \d \vec y \d\vec x \\
      & =               
      \lambda^{n+m}
      \int_{(\mathbb R^d)^n}
      \int_{(\mathbb R^d)^m}
      h_2(o,x_1,\dots, x_n)
      h_2(x_n,y_1,\dots, y_m)
      \\&\quad
      \times    
      \exp\bigg(
        -\lambda
        \Big[
          \int_{\mathbb R^d}
          \big(
            h_1(z;o,x_1,\dots,x_n)
            +
            h_1(z;x_n,y_1,\dots,y_m)
          \big)
          \d z
      \Big]\bigg)
      \d \vec y \d\vec x
    \end{align*}
    The second equality follows from translation invariance and substitution $y_i\to y_i+x_n$. Connecting $m$ different points to $o$ has the same probability as connecting $m$ different points to $x_n$ when they are all shifted by $x_n$. We rename $y_1,\dots, y_m$ to $x_{n+1},\dots, x_{n+m}$. We now reduced the problem to showing
    $$
    h_2(o,x_1,\dots, x_n)
    h_2(x_n ,x_{n+1},\dots, x_{n+m})
    \leq
    h_2(o, x_1,\dots, x_{n+m})
    $$
    and
    $$
    h_1(z; o, x_1,\dots,x_n)
    +
    h_1(z; x_n,\dots, x_{n+m})
    \geq
    h_1(z; o, x_1,\dots, x_{n+m})
    $$
    The first equation follows, since the union of two connected graphs with a vertex in common is again a connected graph and the ordering is maintained. The second equation holds by the union bound.

    We now get existence by choosing $u_n := -\log\P_{\lambda}[|C_o| = n]$ and applying Lemma \ref{lem:Fekete}. Furthermore, since the $u_n$'s are lower bounded by $0$ we get the stronger bound that $\zeta(\lambda)\in [0,\infty)$.
    We also find immediately by Lemma \ref{lem:vol_decay} that if $\lambda <\lambda_c$, then for $n$ sufficiently large we have
    \begin{equation}\label{eq:usefek}
      -\frac1n \log \P [ |C_o| =n ]
      \geq -\frac1n \log( \tilde C\exp(-\tilde cn))
      \geq
      \tilde c-\frac{\log\tilde C}{n}
      \xrightarrow{n\to\infty} \tilde c>0,
    \end{equation}
    and hence $\zeta(\lambda)>0$:
  \end{proof}

  \paragraph{Equivalence of definitions}
  Next, we show that the two different definitions \eqref{eq:def-cor-1} and \eqref{eq:def-cor-2}, of the volume decay rate are indeed equivalent; here we give some details that were omitted in \cite{penrose2003random}.
  It suffices to show that
  $$
  q_n:=q_n(\lambda):=
  \Big(
    \frac{
      \P[|C_o|=n]
    }{
      \P[n\leq |C_o|< \infty]
    }
  \Big)^\frac{1}{n}
  \xrightarrow{n\to\infty} 1.
  $$
  We immediately know that $q_n\leq 1$, so we only need to bound it from below. Let $0<\zeta^-<\zeta(\lambda)<\zeta^+$, to be chosen later. For now $\zeta(\lambda)$ will refer to the definition using $\P[|C_o|=n]$, i.e. \eqref{eq:exp_bound_eq1}. Then, by existence of the limit (Lemma \ref{lem:conv-zeta}), we know that for $n$ large enough
  \begin{equation} \label{eq:eq-one}
    e^{-n\zeta^+}
    <
    \P[|C_o|=n]
    <
    e^{-n\zeta^-}.
  \end{equation}
  This gives us the following bounds on
  $\P[n\leq|C_o|< \infty]=\sum_{k\geq n}\P[|C_o|=k]$,
  \begin{equation}\label{eq:ineq-one}
    \frac{e^{-\zeta^+n}}{1-e^{-\zeta^+}}
    <
    \P[n\leq |C_o|< \infty]
    <
    \frac{e^{-\zeta^-n}}{1-e^{-\zeta^-}}.
  \end{equation}
  For the above bound we used the fact that $\zeta^->0$ to obtain convergence for the geometric series. This ceases to be possible if $\zeta(\lambda)=0$. This, however, only happens in the critical and supercritical cases, which we disregard here.

  Now let $\eps>0$. We want to show the existence of an $n_0$ such that for all $n\geq n_0$ we have $q_n\in(1-\eps,1]$. We now choose $\zeta^\pm =\zeta(\lambda)\pm \eps/4$. Let $n$ be sufficiently large, such that \eqref{eq:eq-one} holds and
  \begin{equation}\label{eq:zeta_eps_simp}
    e^{-\eps/2}(1-e^{-\zeta^-})^\frac{1}{n}
    >
    (1-\eps).
  \end{equation}
  Now we bound $q_n$ using \eqref{eq:eq-one} and \eqref{eq:ineq-one}:
  \begin{equation*}
    q_n
    >
    \frac
    {e^{-\zeta^+}(1-e^{-\zeta^-})^{\frac1n}}
    {e^{-\zeta^-}}.
  \end{equation*}
  Substituting for the defintion of $\zeta^\pm$ together with \eqref{eq:zeta_eps_simp} we find that
  \begin{align*}
    q_n
    >
    1-\eps.
  \end{align*}
  Thus the two definitions of $\zeta$ are equivalent.

  \paragraph{Continuity and monotonicity}
  For continuity and monotonicity of $\zeta$ we follow \cite[Theorem 10.1]{penrose2003random} quite closely.
  First we show that $\zeta$ is non-increasing and continuous. Consider the quantities $q_n(\lambda) = \P[|C_o|=n]$ and $q^+_n:= \P[n\leq| C_o|< \infty]$. It is easy to see that $q^+_n$ is increasing in $\lambda$ for every $n$ in the subcritical regime. We now define
  $$\rho(\lambda)
  :=
  \lim_{n\to\infty} q^+_n(\lambda)^{1/n}
  =
  e^{-\zeta(\lambda)}.
  $$
  We see that $\rho$ is non-decreasing, which in turn shows that $\zeta$ is non-increasing in $\lambda$.

  We move on to continuity in $\lambda$. We couple the RCM at different intensities $\lambda$ by giving every point $X_i$ a marking $W_i$, as described in Section \ref{ss:DefOver}.
  For a given intensity $\lambda$ we simply retain all points where $W_i\leq \lambda$.

  Consider $0<\lambda<\mu<\lambda_c$.
  Our coupling ensures that all edges of $\xi^o_\lambda$ will also
  be present  in $\xi^o_\mu$ (in the situation of \cite{penrose2003random}
  this would have been automatic).

  One way for $|C_o|=n$ in $\xi^o_{\lambda}$ to hold is to require $|C_o|=n$ in $\xi^o_{\mu}$ and all $(X_i, W_i)\in C_o$ have the property that $W_i<\lambda$, which has probability $\lambda/\mu$, per vertex.
  This gives
  \begin{equation}
    \P[|C_o|=n \text{ in } \xi^o_{\lambda}]
    \geq
    (\frac\lambda\mu)^{n-1}
    \P[|C_o|=n \text{ in } \xi^o_{\mu}].
  \end{equation}
  Then
  $$
  \rho(\lambda)
  =
  \lim_{n\to\infty} q_n(\lambda)^{1/n}
  \geq
  \lim_{n\to\infty} ((\frac\lambda\mu)^{n-1} q_n(\mu))^{1/n}
  =
  \frac\lambda\mu \rho(\mu).
  $$
  This together with the fact that $\rho$ is non-decreasing gives the continuity for $\rho$. Fix some $\lambda\in(0,\lambda_c)$. Let $\eps>0$, $\delta = \lambda\eps/{\rho(\lambda)}$ and $\lambda'\in (\lambda-\delta, \lambda + \delta)$. Assume $\lambda' > \lambda$. Then
  $$
  \rho(\lambda')-\rho(\lambda)
  \leq
  \frac{\lambda'}\lambda \rho (\lambda) - \rho(\lambda)
  \leq \rho(\lambda)\frac{\delta}{\lambda}
  \leq \eps.
  $$
  The case where $\lambda'<\lambda$ is analogous.

  Continuity of $\rho$ implies that $\zeta$ is also continuous in $\lambda$ in the range $(0,\lambda_c)$.

  \paragraph{Near-zero behavior}
  We show as $\lambda\to 0$ that $\zeta(\lambda, \conn)\to\infty$.
  The proof of this in \cite{penrose2003random} for the Gilbert case does not
  appear to carry over, and we give a different argument.
  We show this by bounding the size of the component of the origin by the total progeny (total number of vertices) of a Galton-Watson tree.
  %
  For simplicity we write
  $\int \psi dx$ for $\int_{\mathbb R^d} \psi(x)dx$, and set
  $\alpha :=\lambda\int\conn\d x$.
  We know
  that the origin has
  $\Pois(\alpha)$
  many offspring.
  Let $\tau$ denote the Galton-Watson tree with offspring distribution $\Pois(\alpha)$. We know
  from e.g.  \cite{spread-out}
  that $|\tau|$ dominates $|C_o|$ in the sense that
  \begin{equation}
    \P[|\tau|\geq k]\geq \P[|C_o|\geq k] ~~~~ \forall ~ k \in \mathbb R.
  \end{equation}
  We know that for $\alpha<1$ the Galton-Watson tree is subcritical and so the total progeny $|\tau|$ of the tree is almost-surely finite.
  By using \nameref{thm:dwass} (Lemma \ref{thm:dwass}) we find that
  \begin{equation}
    \begin{aligned}
      \P[|\tau|\geq k]
      & =
      \sum_{j=k}^\infty
      \frac1j\P[\Pois(j\alpha)=j-1] \\
      & =
      \sum_{j=k}^\infty
      \frac{
        e^{-j\alpha}(j\alpha)^{j-1}
      }{
        j!
      }.
    \end{aligned}
  \end{equation}
  By the fact that for all $j\in\nats$ we have $\frac{j^j}{j!}\leq e^j$ we find that
  \begin{equation}
    \begin{aligned}
      \P[|\tau|\geq k]
      & \leq
      \frac1\alpha
      \sum_{j=k}^\infty
      (e^{(1-\alpha)}\alpha)^j \\
      & =
      \frac{
        (e^{1-\alpha}\alpha)^k
      }{
        \alpha(1-e^{1-\alpha}\alpha)
      }.
    \end{aligned}
  \end{equation}
  By substituting $\lambda\int\conn\d x$ back in for $\alpha$ we find that
  \begin{equation}
    \begin{aligned}
      \limsup_{n\to\infty}
      \frac1n \log \P[|C_o|\geq n]
      & \leq
      \limsup_{n\to\infty}
      \frac1n \log \P[|\tau|\geq n] \\
      & \leq
      \limsup_{n\to\infty}
      \Big(
        (1-\alpha)
        + \log\alpha
        -\frac1n \log\alpha
        -\frac1n \log(1-e^{1-\alpha}\alpha)
      \Big)                         \\
      & \leq
      1-\lambda\int\psi\d x + \log\big(\lambda \int \psi \d x\big).
    \end{aligned}
  \end{equation}
  And so the upper bound tends to $-\infty$ as $\lambda$ goes to zero. Hence, by applying Definition \ref{def:cor-length}, we have that $\zeta(\lambda, \conn)\to\infty$, completing the proof of Theorem \ref{thm:zeta-prop}.

  \subsection{Proof of \autoref{thm:main_large}}
  The following proof closely follows the proof in
  \cite[Theorem 10.3]{penrose2003random} for the special case of
  the Gilbert graph.
  \begin{proof}[Proof of \autoref{thm:main_large}]
    We start by showing that the largest component in a box with side lengths $s$ is no larger than $\frac{d}{\zeta(\lambda)}\log s$. Let $\alpha > \frac{d}{\zeta(\lambda)}$. By applying the Markov bound and then the Mecke formula we find
    \begin{align*}
      \P[
        |L_1(\mathcal \xi_{\lambda}\cap \Lambda_s)|
        \geq
        \alpha \log s
      ]
      & \leq
      \E \Big[
        \sum_{x\in\mathcal P_\lambda}
        \mathbf{1}\{|C(x, \xi^x\cap\Lambda_s)|\geq \alpha\log s\}
      \Big]
      \\
      & =
      \lambda\int_{\Lambda_s}
      \P[
        |C(x, \xi^x\cap\Lambda_s)|\geq\alpha \log s \text{ in }\xi^x_{\lambda}
      ]
      dx
    \end{align*}
    Now consider some $\zeta'\in(\frac d\alpha,\zeta(\lambda))$. By the
    definition \eqref{eq:def-cor-2} of $\zeta(\lambda)$ as a limit, we know that for $n$ and $s$ large enough
    \begin{align*}
      \P[|C(x, \xi^x\cap\Lambda_s)|\geq\alpha \log s\text{ in }\xi^x_{\lambda}]
      & \leq
      \P[|C_o|\geq \alpha \log s\text{ in }\xi^o_{\lambda}]
      \\
      & \leq
      \exp(-\zeta'\alpha \log s) = s^{-\zeta'\alpha}.
    \end{align*}
    Hence, by our choices of $\alpha$ and $\zeta'$, we find
    \begin{align*}
      \P[|L_1(\mathcal \xi_{\lambda}\cap \Lambda_s)|
        \geq
        \alpha \log s
      ]
      \leq
      \lambda s^{d-\zeta'\alpha}\xrightarrow{s\to\infty}0.
    \end{align*}

    For the other direction we choose $\beta < {d}/{\zeta(\lambda)}$ and $\zeta''\in(\zeta(\lambda), d / \beta)$.
    We will tile the box $\Lambda_s$ with smaller boxes.
    Let $m(s) = \lfloor\frac{s}{2\beta\log s}\rfloor^d$ denote the number of boxes.
    Let $\{B_{1,s},\dots, B_{m(s), s}\}$ be the maximal collection of disjoint boxes with side-length $s/\sqrt[d]{m(s)}\geq 2\beta \log s$.
    Thus, the boxes $B_{i,s}$ fill $\Lambda_s$ exactly.
    Let $x_{i,s}$ denote the center of the box $B_{i,s}$.
    Now take $\lambda'\in(0,\lambda)$ such that $\zeta(\lambda')<d/\beta$.
    This is possible by the continuity of $\zeta$.
    We can separate $\mathcal P_\lambda$ into a union of $\mathcal P_{\lambda'}$ and
    $$
    \mathcal P_{\lambda', \lambda} := \mathcal P_\lambda \setminus \mathcal
    P_{\lambda'}
    =
    \set
    {X_i:
    \lambda'\leq W_i<\lambda}.
    $$
    Now take $\zeta'' \in (\zeta(\lambda'),d/\beta)$.

    Let $x\in\reals^d$ and $r>0$. Denote by $D(x, r)$ the closed ball with radius $r$ centered at $x$. If $\mathcal P_{\lambda', \lambda}\cap D(x_{i,s}, 1)$ consists of a single point we denote that point by $X_{i,s}$ and let
    $$
    V_{i,s}
    :=
    |C
    (X_{i,s}, \mathcal \xi^{X_{i,s}}_{\lambda'}\cap B_{i,s})
    |,
    $$
    where $X_{i,s}$ inherits the connections from the original sampling of $\xi_{\lambda}$.
    If $|\mathcal P_{\lambda', \lambda}\cap B_{i,s}|\neq 1$ then let $V_{i,s} = 0$.
    Let $\mu$ be the volume of a $d$-dimensional unit ball. By our choice of box size we know that $\{0<V_{i,s}<\beta\log s\}\subset\{C_{X_{i,s}}(\xi_{\lambda'})\subset B_{i,s}\}$ and so $V_{i,s}$ is has the distribution of the size of the component of the origin. Then, by independence of $\mathcal P_{\lambda'}$ and $\mathcal P_{\lambda', \lambda}$, we find for large $s$ that
    \begin{align*}
      \P[V_{i,s}\geq \beta \log s]
      & \geq
      \mu (\lambda-\lambda')e^{-\mu(\lambda-\lambda')}
      \P[
        |C_o| \geq \beta\log s
        \text{ in }\xi_{\lambda'}^o
      ]                                                           \\
      & \geq c'\exp(-\zeta''\beta \log s) = c's^{-\zeta''\beta},
    \end{align*}
    where the inequality follows from the definition of $\zeta$ and $c'=\mu(\lambda-\lambda')e^{-\mu(\lambda-\lambda')}$. The random variables $V_{i,s}$ are independent, since they are dependent on configurations in disjoint boxes. It follows that
    \begin{align*}
      \P\Big[
        \bigcap_{i=1}^{m(s)} \{V_{i,s}<\beta\log s\}
      \Big]
      & \leq
      \big(
        1-c's^{-\zeta''\beta}
      \big)^{m(s)}
      \\
      & \leq \exp(-c's^{-\zeta''\beta}m(s))
    \end{align*}
    which tends to zero by the definition of $m(s)$ and the fact that $\zeta''\beta < d$. On the other hand, if for some $i$ we have $V_{i,s}\geq \beta \log s$, then $L_1(\xi_{\lambda}\cap \Lambda_s)\geq \beta \log s$. This gives us the desired result.
  \end{proof}

  \section{The supercritical phase}
  \label{s:supercritical}
  To complete the story regarding the largest component away from criticality,
  we provide a law of large numbers in the supercritical phase.
  Given random variables $X$ and $(X_t)_{ t >0}$
  we say $X_t \to X$ in
  $\mathcal L^1$ if $\mathbb E[|X_t -X|] \to 0$ as $ t \to \infty$.

  \begin{theorem}
    \label{th:super}
    Suppose that  $\psi(\cdot)$ is
    radially symmetric and decreasing with bounded
    support. Suppose $\lambda > \lambda_c$.
    Then as $t \to \infty$,
    \begin{align}
      \label{e:giant}
      t^{-d} |L_1(\xi_\lambda \cap \Lambda_t )|
      \to
      \lambda \theta(\lambda).
    \end{align}
  \end{theorem}
  \begin{proof}
    If $d=1$ then $\lambda_c = \infty$ so there is nothing to prove.

    Now suppose $d \geq 2$ and $\lambda > \lambda_c$.
    Set $\gamma:= \gamma(d,\lambda,\psi(\cdot)) := \lambda \theta(\lambda)$.
    Given $t >0$, define the random variable $X_t :=
    |L_1(\xi_\lambda \cap \Lambda_t)|$.

    Suppose $d=2$. It is proved in \cite{pen_giant_comp_srgg}
    that for this case, $t^{-d}X_t \to \gamma $ in probability.
    Let $\varepsilon > 0$.
    By the Cauchy-Schwarz inequality we have
    $$
    \mathbb  E[ |t^{-d} X_t - \gamma |
    \mathbf{1} \{ |t^{-d} X_t - \gamma | > \varepsilon \}]
    \leq  \sqrt{\mathbb E[ (t^{-d} X_t - \gamma)^2]}
    \sqrt{\mathbb P[|X_t -\gamma| > \varepsilon] }
    $$
    which tends to zero.
    Moreover $\mathbb E[X_t^2]
    \leq \mathbb \E[ | \mathcal P_\lambda \cap \Lambda_t|^2
    ] = O(t^{2d})$ as $t \to \infty$, so
    for large enough $t$ we have
    $$
    \mathbb E[ |t^{-d} X_t - \gamma|] \leq \varepsilon
    +
    \mathbb E[ |t^{-d} X_t - \gamma| \mathbf{1}
    \{ |t^{-d} X_t- \gamma| \geq \varepsilon \}] \leq 2 \varepsilon.
    $$
    This gives us \eqref{e:giant} for $d=2$.

    Now suppose $d \geq 3$.
    In \cite{kupper2025} a related  result is proved, namely
    \begin{equation}
      \lim_{t \to \infty} \big(
        t^{-d} \mathbb E[ X_t 
        ]
      \big) = \gamma, 
      \label{e:giant2}
    \end{equation}
    subject to an assumption which is proved in
    \cite[Corollary 1.4]{penrose2025supercritical}.
    We shall 
    deduce \eqref{e:giant} from
    \eqref{e:giant2} (the proof of \eqref{e:giant2} in \cite{kupper2025}
    is substantial, and we make no attempt to reproduce it here).

    Let $Y_t$ be the number of vertices $x \in \mathcal P_\lambda
    \cap \Lambda_t$
    such that $x \leftrightarrow (\mathbb R^d \setminus \Lambda_{t^{1/(2d)}}
    (x) )$ in
    $\xi[\mathcal P_\lambda ]$.
    This random variable is a more manageable proxy
    for $X_t$. By the Mecke formula and stationarity,
    \begin{align}
      \mathbb E[ Y_t] =  \int_{\Lambda_t}
      \lambda \mathbb P[
        x \leftrightarrow (\mathbb R^d \setminus
      \Lambda_{t^{1/(2d)}}(x)) ~ \mathrm{in} ~ \xi_\lambda^x]
      \d x  = t^d  \lambda \theta_t(x).
      \label{e:bymecke1}
    \end{align}
    By the Mecke formula again,
    \begin{align}
      \mathbb E[Y_t(Y_t -1)] =
      \lambda^2 \int_{\Lambda_t} \int_{\Lambda_t}
      \mathbb P[
        \{x \leftrightarrow \mathbb R^d \setminus
        \Lambda_{t^{1/(2d)}}(x) ~ \mathrm{in} ~ \xi_\lambda^{x,y}]
      \}
      &
      \\
      \cap \{ y \leftrightarrow \mathbb R^d \setminus
        \Lambda_{t^{1/(2d)}}(y) ~ \mathrm{in} ~ \xi_\lambda^{x,y}
  ] \}] &
  \d y \d x
  \label{e:bymecke2}
\end{align}
Since
the integrand in \eqref{e:bymecke2}
is equal to $\theta_t(\lambda)^2$ whenever $\| y-x \| \geq
2 \sqrt{d} t^{1/(2d)}$ and lies in $[0,1]$ for all $(x,y)$ we obtain that
$
t^{-2d}  \mathbb E[Y_t(Y_t -1)] \to  (\lambda \theta_t(\lambda))^2
= \gamma^2 $
as $t \to \infty$.
Then using also \eqref{e:bymecke1}   we obtain that
$
\mathbb E[(t^{-d} Y_t - \gamma 
)^2] \to 0.
$
Thus $t^{-d} Y_t \to \gamma $
in ${\mathcal L}^2$ and hence in probability as $t \to \infty$.

Now let $\varepsilon > 0$. If
$t^{-d} X_t
\geq
\gamma +   \varepsilon $ but $t^{-d} Y_t <
\gamma + \varepsilon$, then there exists a component
of $\xi[\mathcal P_{\lambda} \cap \Lambda_t]$ with
at least $t^d(\gamma
+ \varepsilon )$
vertices and with all of its vertices within $\ell^\infty$ distance
$2 t^{1/(2d)}$ of each other (since otherwise they would all contribute to
$Y_t$ and hence $Y_t$ would be too big).
Thus, in this case there is at least one $x \in \mathcal P_\lambda$
such that $|\mathcal P_\lambda \cap \Lambda_{2t^{1/(2d)}}(x)  | \geq
t^d(\gamma + \varepsilon)$.
Hence by Markov's inequality followed by the Mecke equation,
\begin{align}
  &  \mathbb P[
    t^{-d} X_t
    \geq
    \gamma +   \varepsilon , t^{-d} Y_t <
    \gamma 
  + \varepsilon]
  \\
  & \leq
  \mathbb E \Big[ \sum_{x \in \mathcal P_\lambda
    \cap \Lambda_t } \mathbf{1}\{
      | \mathcal P_\lambda \cap \Lambda_{2 t^{1/(2d)}}(x)
      | \geq (\gamma 
      + \varepsilon)
  t^d  \} \Big]
  \\
  & = \lambda \int_{\Lambda_t} \mathbb P[
    | \mathcal P_\lambda^x \cap  \Lambda_{2 t^{1/(2d)}}(x)
    | \geq (\gamma 
    + \varepsilon)
  t^d  ] \d x
\end{align}
and using \cite[Lemma 1.2]{penrose2003random}, we obtain that this
probability tends to zero. Since also $\mathbb P[t^{-d} Y_t
  \geq \gamma 
+ \varepsilon] \to 0 $,
we  have that $\mathbb P[t^{-d} X_t 
  > \gamma 
+ \varepsilon] $ tends to zero. Therefore
by the Cauchy-Schwarz inequality,
\begin{align}
  \mathbb E[ (t^{-d} X_t - \gamma 
    )^+
    \mathbf{1} \{ t^{-d} X_t - \gamma 
      > \varepsilon
  \}]
  \leq \sqrt{ \mathbb E[(t^{-d} X_t - \gamma 
  )^2]}
  \sqrt{\mathbb P[ t^{-d} X_t - \gamma 
  > \varepsilon] }
\end{align}
which tends to zero since $\mathbb E[X_t^2] \leq \mathbb E[|\mathcal P_\lambda
\cap \Lambda_t|^2] = O(t^{2d})$ as $t \to \infty$. Also clearly
$\mathbb E[(t^{-d} X_t - \gamma 
  )^+ \mathbf{1} \{t^{-d} X_t
    - \gamma 
\leq \varepsilon \}] \leq \varepsilon$,
and since $\varepsilon$ is arbitrarily small we have
$\mathbb E[(t^{-d} X_t - \gamma 
)^+ ] \to 0$
as $t \to \infty$. Then using \eqref{e:giant2} we also have as
$t \to \infty$ that
$$
\mathbb E[(t^{-d} X_t - \gamma 
)^- ]
= \mathbb E[(t^{-d} X_t - \gamma 
)^+ ]
- \mathbb E[(t^{-d} X_t - \gamma 
) ]
\to 0,
$$
and hence
$$
\mathbb E[|t^{-d} X_t - \gamma 
| ]
=
\mathbb E[(t^{-d} X_t - \gamma 
)^+ ]
+
\mathbb E[(t^{-d} X_t - \gamma 
)^- ]
\to 0.
$$
Therefore
$t^{-d} X_t$ converges to  $\gamma 
$ in ${\mathcal L}^1$, i.e. 
\eqref{e:giant}
holds.
\end{proof}


\begin{thebibliography}{}

\bibitem{caicedo2023critical}
Caicedo, A. and Dickson, M. (2024).
\newblock Critical exponents for marked random connection models.
\newblock {\em Electron. J. Probab.} $\mathbf{29}$,  article no. 151,
1--57.

\bibitem{dembin2023almost}
Dembin, B. and Tassion, V. (2023)
\newblock Almost sharp sharpness for Poisson Boolean percolation.
\newblock arXiv:2209.00999


\bibitem{dickson2023expansion}
Dickson, M. and  Heydenreich, M. (2025).
\newblock Expansion of the critical intensity for the random connection model.
\newblock  
{\em Combin. Probab. Comput.} $\mathbf{34}$ 
158--209.

\bibitem{duminil-intro}
Duminil-Copin, H. (2018).
\newblock Introduction to {B}ernoulli percolation.
\newblock Available from the author's website.

\bibitem{duminil2020subcritical}
Duminil-Copin, H., Raoufi, A. and Tassion, V. (2020)
\newblock
Subcritical phase of $d$-dimensional Poisson-Boolean percolation and its vacant set.
\newblock
{\em Ann. H. Lebesgue} $\mathbf{3}$, 677--700.

\bibitem{duminilcopin2015new}
Duminil-Copin, H.  and Tassion, V. (2016).
\newblock A new proof of the sharpness of the phase transition for {B}ernoulli
percolation on $\mathbb{Z}^d$.
\newblock
{\em Enseign. Math.} $\mathbf{62}$, 
199--206.

\bibitem{dwass}
Dwass, M. (1969).
\newblock The {t}otal {p}rogeny in a {b}ranching {p}rocess and a {r}elated
{r}andom {w}alk.
\newblock {\em Journal of Applied Probability} $\mathbf{6}$,
682--686.

\bibitem{feketeWurz}
Fekete, M. (1923).
\newblock {\"U}ber die {V}erteilung der {W}urzeln bei gewissen algebraischen
{G}leichungen mit ganzzahligen {K}oeffizienten.
\newblock {\em Math. Z.} $\mathbf{17}$,
228--249.
\bibitem{jorritsma2025large}
Jorritsma, J., Komjathy, J. and Mitsche, D. (2025)
\newblock Large deviations of the giant in supercritical kernel-based spatial random graphs.
\newblock {\em Probab. Theory Related Fields.}
https://doi.org/10.1007/s00440-025-01417-1

\bibitem{lace_exp_mean}
Heydenreich, M.,  van~der Hofstad, R.,  Last, G.  and
Matzke, K. (2026).
\newblock Lace {e}xpansion and {m}ean-{f}ield {b}ehavior for the {r}andom
{c}onnection {m}odel.
{\em Ann. Inst. Henri Poincar\'{e} Probab. Stat.} $\mathbf{62}$,
no. 1.

\bibitem{higgs}
Higgs, F. (2025)
Exponential decay for the random connection model using asymptotic transitivity.  arXiv:2509.02310

\bibitem{kupper2025}
K\"upper, N. (2025). {\em Large Components in the Random
Connection Model.} Ph.D. Thesis, University of Bath.

\bibitem{v1}
K\"upper, N. and Penrose, M. D. (2024)
Largest component and sharpness in subcritical continuum percolation. arXiv:2407.10715v1

\bibitem{LastZiescheStationary}
Last, G. and  Ziesche, S. (2017).
\newblock On the {O}rnstein-{Z}ernike equation for stationary cluster processes
and the random connection model, 2016.
\newblock {\em Adv. in Appl. Probab.} $\mathbf{49}$,  
1260--1287.


\bibitem{mit_giant_comp_srgg}
Lichev, L.,  Lodewijks, B.,   Mitsche, D.  and Schapira, B. (2023).
\newblock 
On the first and second largest components in the percolated random geometric graph.
\newblock {\em Stochastic Process. Appl.} $\mathbf{164}$,  
311--336.

\bibitem{meester1996continuum}
Meester, R.  and Roy, R. (1996).
\newblock {\em Continuum {P}ercolation}.
\newblock 
Cambridge University Press, Cambridge.

\bibitem{menshikov1986coincidence}
Mensshikov, M. (1986)
\newblock  Coincidence of critical points in percolation problems.
\newblock {\em Dokl. Akad. Nauk SSSR} $\mathbf{288}$,
1308--1311.

\bibitem{monch}
M\"{o}nch, C. (2024)
\newblock Inhomogeneous long-range percolation in the weak decay regime.
{\em Probab. Theory Related Fields} $\mathbf{189}$ (2024),
1129--1160.

\bibitem{penrose2003random}
Penrose, M. (2003).
\newblock {\em Random {G}eometric {G}raphs}.
\newblock 
Oxford University Press, Oxford. 

\bibitem{penrose1991}
Penrose, M. D. (1991).
\newblock On a {C}ontinuum {P}ercolation {M}odel.
\newblock {\em Adv. in Appl. Probab.} $\mathbf{23}$,
536--556.

\bibitem{spread-out}
Penrose, M. D.  (1993).
\newblock On the {s}pread-{o}ut {l}imit for {b}ond and {c}ontinuum
{p}ercolation.
\newblock {\em Ann. Appl. Probab.}, $\mathbf{3}$,
253--276.

\bibitem{penrose1996large}
Penrose, M.D. and Pisztora, A. (1996)
\newblock Large deviations for discrete and continuous percolation.
\newblock {\em Adv. in Appl. Probab.} $\mathbf{28}$, 29--52.


\bibitem{pen_giant_comp_srgg}
Penrose, M. D. (2022).
\newblock Giant component of the soft random geometric graph, 2022.
\newblock
{\em Electron. Commun. Probab.} $\mathbf{27}$,
Paper No. 53, 10 pp.

\bibitem{penrose2025supercritical}
Penrose, M. D. (2025)
\newblock Supercritical phase of the random connection model.
\newblock
arXiv:2508.11562v2

\bibitem{vanneuville}
Vanneuville, H. (2025)
\newblock Exponential decay of the volume for Bernoulli percolation: a proof via stochastic comparison.
\newblock {\em Ann. H. Lebesgue} $\mathbf{8}$, 101--112.

\bibitem{ziesche}
Ziesche, S. (2018)
\newblock Sharpness of the phase transition and lower bounds for the critical intensity in continuum percolation on $\mathbb R^d$.
{\em Ann. Inst. Henri Poincaré Probab. Stat.} $\mathbf{54}$,
866--878.


\end{thebibliography}

\end{document}